\documentclass[12pt, leqno]{article}
\usepackage{indentfirst}
\usepackage{amstext}
\usepackage{amsthm}
\usepackage{amsopn}
\usepackage{amsfonts}
\usepackage{amsmath}
\usepackage{latexsym}
\usepackage{amscd}
\usepackage{amssymb}
\usepackage{amsmath}
\usepackage{color}

\usepackage[all,cmtip]{xy}
\usepackage{leftidx}
\usepackage{graphicx}
\usepackage{tikz}
\usepackage{dutchcal}

\textheight     =8in
\topmargin      =-.1in          

\textwidth      =6in \oddsidemargin  =.25in \evensidemargin
=\oddsidemargin \font\teneufm=eufm10 \font\seveneufm=eufm7
\font\fiveeufm=eufm5
\newfam\eufmfam
\textfont\eufmfam=\teneufm \scriptfont\eufmfam=\seveneufm
\scriptscriptfont\eufmfam=\fiveeufm

\let\goth\mathfrak
\def\cR{\mathcal R}
\def\fp{\mathfrak{p}}
\def\fq{\mathfrak{q}}

\def\gg{\goth g}

\def\gg{\goth g}

\def\gp{\goth p}

\def\gg{\goth g}

\def\gq{\goth q}

\def\1{\mbox{\bf 1}}

%

 \DeclareMathOperator{\Hom}{Hom}
\DeclareMathOperator{\Aut}{Aut} 
 \DeclareMathOperator{\Der}{Der}

\DeclareMathOperator{\End}{End} 

\DeclareMathOperator{\Ad}{Ad} 
  \DeclareMathOperator{\GL}{\bf GL}

\newtheorem{theorem}{Theorem}[section]

\newtheorem{corollary}[theorem]{Corollary}

\newtheorem{lemma}[theorem]{Lemma}

\newtheorem{proposition}[theorem]{Proposition}



\theoremstyle{definition}
\newtheorem{remark}[theorem]{Remark}
\newtheorem{example}[theorem]{Example}

\numberwithin{equation}{section}

\def\Z{\Bbb Z}
\def\C{\Bbb C}
\def\Q{\Bbb Q}

\def\C{\mathbb C}

\def\bF{\text{\rm \bf F}}
\def\bH{\text{\rm \bf H}}
\def\bG{\text{\rm \bf G}}

\def\bW{\text{\rm \bf W}}

\def\cEnd{\mathcal End }

\def\bX{\text{\rm \bf X}}
\def\rX{\text{\rm X}}

\def\rU{\text{\rm U}}
\def\rV{\text{\rm V}}
\def\rZ{\text{\rm Z}}
\def\cL{{\cal L}}

\def\rL{{\rm L}}
\def\cO{{\cal O}}
\def\calO{{\cal O}}
\def\cF{{\cal F}}
\def\cE{{\cal E}}

\def\rLie{\text{\rm Lie}}

\def\ad{\text{\rm ad}}

\def\bT{\text{\rm \bf T}}

\def\bL{\text{\rm \bf L}}

\def\rS{\text{\rm{S}}}
\def\T{\text{\rm{T}}}

\def\bO{\text{\rm \bf O}}

\def\bV{\text{\rm \bf V}}
\def\bG{\text{\rm \bf G}}
\def\bH{\text{\rm \bf H}}

\def\us{\underset}

\def\ol{\overline}

\def\rLie{\text{\rm Lie}}

\def\Lie{\mathop{\rm Lie}\nolimits}

\def\2int{\mathop{2\int}\nolimits}

\def\End{\mathop{\rm  End}\nolimits}

\def\dim{\mathop{\rm dim}\nolimits}

\def\Spec{\mathop{\rm Spec}\nolimits}

\def\Lie{\mathop{\rm Lie}\nolimits}
\def\bLie{\mathop{\bf Lie}\nolimits}

\def\Hom{\mathop{\rm Hom}\nolimits}
\def\Der{\mathop{\rm Der}\nolimits}

\def\Aut{\text{\rm{Aut}}}

\def\bAut{\text{\bf{Aut}}}
\def\bIsom{\text{\bf{Isom}}}
\def\bOut{\text{\bf{Out}}}
\def\bEnd{\text{\bf{End}}}
\def\bDer{\text{\bf{Der}}}

\def\cLie{\mathcal{L}\mathcal{i}\mathcal{e}}

\def\cAut{\mathcal{A}\mathcal{u}\mathcal{t}}

\def\resp.{\mathop{\rm resp.}\nolimits}

\def\lgr{\longrightarrow}
\def\la{\longleftarrow}

\font\math=cmmi10
\def\varpi{\hbox{\math\char'44}}
\def\simlgr{\buildrel\sim\over\lgr}
\def\simla{\buildrel\sim\over\la}

\def\pa{\S\kern.15em }

\def\un{\uppercase\expandafter{\romannumeral 1}}
\def\deux{\uppercase\expandafter{\romannumeral 2}}
\def\trois{\uppercase\expandafter{\romannumeral 3}}
\def\quatre{\uppercase\expandafter{\romannumeral 4}}
\def\cinq{\uppercase\expandafter{\romannumeral 5}}
\def\six{\uppercase\expandafter{\romannumeral 6}}

\def\gg{\goth g}

\newcommand{\uV}{\mathbf V}


\def\hfl#1#2#3{\smash{\mathop{\hbox to#3{\rightarrowfill}}\limits
^{\scriptstyle#1}_{\scriptstyle#2}}}
\def\gfl#1#2#3{\smash{\mathop{\hbox to#3{\leftarrowfill}}\limits
^{\scriptstyle#1}_{\scriptstyle#2}}}

\begin{document}

\title{Fiberwise Criteria for Twisted Forms of Algebraic Structures}

\author{ P. Gille$^{\rm 1}$ and A. Pianzola$^{\rm 2,3}$}

\date{}
\maketitle

\centerline{\it In memoriam Professor Georgia Benkart}

\bigskip

$^{\rm 1}${\it UMR 5208 du CNRS -
Institut Camille Jordan - Universit\'e Claude Bernard Lyon 1, 43 boulevard du
11 novembre 1918, 69622 Villeurbanne cedex - France.}

$^{\rm 2}${\it Department of Mathematical Sciences, University of
Alberta, Edmonton, Alberta T6G 2G1, Canada.}

$^{\rm 3}${\it Centro de Altos Estudios en Ciencia Exactas, Avenida de Mayo 866, (1084) Buenos Aires, Argentina.}

 \maketitle 
 
 \begin{abstract} We provide a criterion for certain algebraic objects over Jacobson schemes to be forms of each other based on their behaviour at closed fibres. This criterion permits to answer a question that I. Burban had asked the authors.
 
\noindent {\em Keywords:} Reductive group scheme, torsor, Lie algebras.  \\

\noindent {\em MSC 2000:} 14L30, 17B67, 11E72,  14E20.
\end{abstract}

\bigskip
\bigskip

\section{Introduction}
Let $\bG$ be a group scheme over a base scheme $\rS.$ The concept of $\bG$--torsors over $\rS$ is to be found in many areas of mathematics and mathematical physics; they are useful tools to frame problems in a language that is partial to powerful methods from algebraic geometry. $\bG$ acts on itself by right multiplication. A $\bG$--torsor is a scheme $\bX$ with a right action of $\bG$ that ``locally" looks like $\bG$ with this action. By locally we mean that there exists a faithfully flat and locally finitely presented  scheme morphism $\T \to \rS$ such that $\bX_\T $ and $\bG_\T$ are isomorphic as $\T$--schemes with the corresponding induced $\bG_\T$--action.

The treatise \cite{SGA3} shows how a deep understanding of $\bG$ can be had through the study of its fibres $\bG_s := \bG \times_\rS \Spec\big(\kappa(s)\big)$ where $\kappa(s)$ is the residue field of  $s \in \rS.$ It thus seems natural, given a right action of $\bG$ on an $\rS$--scheme $\bX,$ to study situations under which  the fibres $\bX_s$ being $\bG_s$--torsors yield information about $\bX$ itself being a $\bG$--torsor. In practice the fibres of closed points are more tractable. The existence of ``enough" closed points brings up the natural working hypothesis that $\rS$ be a Jacobson scheme. It is under this  assumption that we are able to create a fiberwise criterion for torsors under certain reductive group schemes.

This work grew up out of the desire to answer a question that I. Burban posed to the authors a couple of years ago \cite{Bu}. His question and the (positive) answer that can be given using our main result is given in the last section.

\medskip

\noindent {\bf Acknowledgments.} The second author wishes to sincerely thank CNRS for facilitating an invited researcher visit to the  Camille Jordan
Institute.

\section{Preliminaries on  group schemes and smoothness}

Throughout this paper $\rS$ is a (base) scheme with structure sheaf $\calO_\rS$. By an $\rS${\it ---functor} (resp.\ {\it  monoid, group}) we mean a contravariant functor $\bF$
from the category of schemes over $\rS$ to the category of sets (resp.\ monoids, groups). When $\bF$ is representable, i.e. a scheme, we say that $\bF$ is an $\rS$--{\it scheme} (resp. {\it monoid scheme, group scheme}). The use of bold face characters in the text is used to emphasize the functorial nature of the object under consideration. In particular, if ${\rm X}$ is an $\rS$--scheme then its functor of points
$$\T \mapsto \Hom_{\rS-{\rm Sch}}(\T ,{\rm X})$$
will be denoted by $\bX.$

If $\bF$ is an $\rS$--functor and $\T$ a scheme over $\rS$, we denote by $\bF_\T$ the $\T$--functor obtained by base change. If $\T = \Spec(R)$ we denote $\bF(\T)$ and $\bF_\T$ by $\bF(R)$ and $\bF_R$ respectively. If $\bF$ is an $\rS$--scheme, then $\bF_\T = \bF \times_\rS \T$ and, as it is customary, we denote  $\bF \times_{\Spec(R)} \T$ by  $\bF \times_R \T$ and $\bF \times_\rS \Spec(R)$ by  $\bF \times_\rS R$

We denote as usual by $ \bO_\rS $ the affine $\rS$--ring scheme $\rS[t] =  \Spec(\Z[t])\times_\Z \rS.$  Thus $\bO_\rS(\T) = \cO_\T(\T).$ Recall that an $\bO_\rS$-{\it module} is an abelian $\rS$--group ${\bf M}$ together with  an $\bO_\rS(\T)$--module structure on ${\bf M}(\T)$ that is functorial on $\T.$ The concept of an $\bO_\rS$--algebra is defined similarly. Base change is defined and denoted as it is for functors.

We mainly use  the terminology and notation of Grothendieck--Dieudonn\'e \cite{EGA-neu}, which for the most part agrees with that of Demazure--Grothendieck used in \cite[Exp.~I and II]{SGA3}. Below we briefly review those concepts and results that are relevant to this paper. \smallskip

\subsection{Groups attached to quasi--coherent modules} Let $\cE$ be a quasi--coherent module over $\rS$. We denote its 
dual by $\cE^{^{\vee}}$. For each morphism  $f: \T \to \rS$ we let $\cE_{\T}=f^*(\cE)$ be the 
inverse image of $\cE$ under the morphism $f$, and define an abelian $\rS$--group
$\bV(\cE)$ by $\bV(\cE)(\T) = \Hom_{\calO_\T}(\cE_\T, \calO_\T) = \Gamma\big(\T, (\cE_\T)^{^{\vee}}\big)$.  $\bV$ is actually an $\rS$--scheme; it is represented by the affine scheme $\Spec\bigl( \mathrm{\bf Sym}(\cE)\bigr)$ where $\mathrm{\bf Sym}(\cE)$ is the symmetric $\cO_\rS$--algebra of $\cE$ \cite[9.4.9]{EGA-neu}.
If $\cE$ is of finite type (resp.\ of finite presentation), then $\uV(\cE)$ is an $\rS$--scheme of
finite type (resp.\ of finite presentation), ibid, 9.4.11.

The  abelian $\rS$--group $\bW(\cE)$ is defined by $ \bW(\cE)(\T) = \Gamma(\T , \cE_{\T}).$ Recall that if $\cE$ is locally free of finite type then $\bW(\cE) \simeq \bV(\cE^{^{\vee}}).$ In particular $\bW(\cE)$ is in this case an affine $\rS$--scheme. Note that the abelian $\rS$--groups $\bV(\cE)$ and $\bW(\cE)$ have natural $\bO_\rS$--module structures.

\begin{example}\label{affineO} Assume $\rS = \Spec(R).$ Let $\cL$ be a quasi--coherent $\cO_\rS$--module and denote by $\rL$ the corresponding $R$--module. We denote $\bW(\cL)$ by $\bW(\rL).$ For all $\rS$--scheme $\T$ by definition $\bW(\rL)(\T)$ is the $\cO_\T(\T)$--module $\rL\otimes_R \cO_\T(\T).$ Similarly if $\cL$ is an $\cO_\rS$--algebra.
\end{example}

\begin{remark}\label{FuFa} We can view $\bV$ (resp. $\bW$) as a contravariant (resp. covariant) functor from the category of quasi-coherent $\cO_\rS$-modules to the category of $\bO_\rS$-modules.
These functors are full and faithful  \cite[I Prop. 4.6.2]{SGA3}
\end{remark} 

To $\cE$ we attach the $\rS$-functor $\bEnd_{\cO_\rS-{\rm mod}}(\cE)$ whose functor of points is given by
$ \T \mapsto \End_{{\cO_\T}-{\rm mod}}(\cE \otimes_{\cO_\rS} \cO_\T).$  This can be viewed as an abelian $\rS$-group or an $\bO_\rS$--module. We also have the $\rS$--group $\bAut_{\cO_\rS-{\rm mod}}(\cE)$ given by $\T \mapsto \Aut_{{\cO_\T}-{\rm mod}}(\cE \otimes_{\cO_\rS} \cO_\T).$  If in addition $\cE$ is an $\cO_\rS$--algebra, one defines the $\rS$--group $\bAut_{\cO_\rS-{\rm alg}}(\cE)$ in the obvious way. It is an $\rS$--subgroup of $\bAut_{\cO_\rS-{\rm mod}}(\cE)$. 

Similarly to an $\bO_\rS$-module $\bL$ we attach an abelian $\rS$--group (in fact an $\bO_\rS$--module) $\bEnd_{\bO_\rS-{\rm mod}}(\bL)$ via $\T \mapsto \End_{\bO_\T-{\rm mod}}(\bL_\T).$ Finally if $\bL$ is an $\bO_\rS$--algebra, the $\rS$--group $\bAut_{\bO_\rS-{\rm alg}}(\bL)$ is given by the functor of points $\T \mapsto  \Aut_{\bO_\T-{\rm alg}}(\bL_\T).$

\begin{lemma}\label{repres} Let $\cL$ be an $\cO_\rS$--algebra. Assume that as an $\cO_\rS$--module $\cL$ is locally free of finite rank. Then.

\noindent (1) The natural maps
$$ \bEnd_{\cO_\rS-{\rm mod}}(\cL) \to \bEnd_{\bO_\rS-{\rm mod}}\big(\bW(\cL)\big)$$
and
$$ \bAut_{\cO_\rS-{\rm alg}}(\cL) \to \bAut_{\bO_\rS-{\rm alg}}\big(\bW(\cL)\big)$$
are $\rS$-functor isomorphisms.
\smallskip

\noindent (2)  $\bEnd_{\cO_\rS-{\rm mod}}(\cL)$ and $\bAut_{\cO_\rS-{\rm alg}}(\cL)$ are representable by affine $\rS$--schemes of finite presentation.
\end{lemma}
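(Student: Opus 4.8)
The plan is to deduce part (1) from the full faithfulness of $\bW$ recorded in Remark \ref{FuFa}, upgraded to a statement of $\rS$--functors by exploiting that $\bW$ commutes with base change. Concretely, for any $f:\T\to\rS$ one has a canonical identification $\bW(\cL)_\T\simeq\bW_\T(\cL_\T)$, where $\bW_\T$ denotes the analogous functor over the base $\T$ and $\cL_\T=f^*\cL$. Thus the first map under consideration, evaluated at $\T$, is exactly
$$\End_{\cO_\T-{\rm mod}}(\cL_\T)\longrightarrow\End_{\bO_\T-{\rm mod}}\big(\bW_\T(\cL_\T)\big),\qquad \phi\mapsto\bW_\T(\phi),$$
which is bijective by Remark \ref{FuFa} applied over the base $\T$. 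Since this holds compatibly in $\T$, the first map is an isomorphism of $\rS$--functors. So I would first record the base--change compatibility of $\bW$ and then run full faithfulness fibrewise in this sense.

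For the second (algebra) map I would restrict the module isomorphism just obtained to the relevant subfunctors. Full faithfulness already forces invertibility to correspond to invertibility, so it remains only to match the algebra condition on both sides. Writing $u=\bW_\T(\phi)$ for $\phi\in\End_{\cO_\T-{\rm mod}}(\cL_\T)$, the multiplication on the $\bO_\T$--algebra $\bW_\T(\cL_\T)$ is, at every $\T$--scheme $\T'$, nothing but the multiplication of the $\cO_{\T'}$--algebra $\cL_{\T'}$ read on global sections. Hence $u$ is an $\bO_\T$--algebra morphism iff $\phi_{\T'}$ respects products for all $\T'$; letting $\T'$ range over the affine opens of $\T$ shows this is equivalent to $\phi$ being a morphism of $\cO_\T$--algebras, the converse direction being immediate by base change. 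This identifies $\bAut_{\cO_\rS-{\rm alg}}(\cL)$ with $\bAut_{\bO_\rS-{\rm alg}}\big(\bW(\cL)\big)$ and yields the second isomorphism.

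For part (2) I would use the hypothesis that $\cL$ is locally free of finite rank. Then $\cEnd_{\cO_\rS}(\cL)\simeq\cL^{\vee}\otimes_{\cO_\rS}\cL$ is again locally free of finite rank, its formation commutes with base change, and one obtains $\bEnd_{\cO_\rS-{\rm mod}}(\cL)\simeq\bW\big(\cEnd_{\cO_\rS}(\cL)\big)\simeq\bV\big(\cEnd_{\cO_\rS}(\cL)^{\vee}\big)$, which is affine of finite presentation by \cite[9.4.9 and 9.4.11]{EGA-neu}. The module automorphism group $\bAut_{\cO_\rS-{\rm mod}}(\cL)=\bGL(\cL)$ is then affine of finite presentation as well, being the closed subscheme of $\bEnd_{\cO_\rS-{\rm mod}}(\cL)\times_\rS\bEnd_{\cO_\rS-{\rm mod}}(\cL)$ cut out by $uv=vu=\id$. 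Finally I would realize $\bAut_{\cO_\rS-{\rm alg}}(\cL)$ as the closed subfunctor of this scheme defined by the further equations asserting that $u$ is multiplicative and unital (the inverse of an invertible algebra endomorphism being automatically one); as a closed subscheme of an affine scheme of finite presentation it is again affine of finite presentation, the finiteness of the presentation being checked Zariski--locally on $\rS$, where $\cL$ is free and the equations become finitely many explicit polynomial relations in the structure constants.

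The routine parts are the base--change compatibilities and the bookkeeping of equations. The one place that needs genuine care is the algebra version of part (1): since Remark \ref{FuFa} only asserts full faithfulness of $\bW$ as a functor of \emph{modules}, and $\bW$ does not visibly preserve tensor products, one cannot simply transport the bilinear multiplication map across the equivalence. I would circumvent this by describing the multiplication of $\bW(\cL)$ pointwise as the product of the algebra $\cL_{\T'}$ on global sections, so that compatibility with a given $\phi$ can be tested affine open by affine open. The secondary point requiring attention is the finite presentation (rather than mere closedness) of $\bAut_{\cO_\rS-{\rm alg}}(\cL)$ in part (2), which is exactly where the local freeness of $\cL$ is used a second time.
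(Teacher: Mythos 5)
Your proof is correct and follows essentially the same route as the paper: part (1) by reducing to the full faithfulness of $\bW$ (Remark \ref{FuFa}) applied after base change, and part (2) by identifying $\bEnd_{\cO_\rS-{\rm mod}}(\cL)$ with the affine scheme attached to the locally free module $\cL^{\vee}\otimes_{\cO_\rS}\cL$ and cutting out the automorphism functors by finitely many equations locally. Your extra care with the invertibility locus (embedding $\bGL(\cL)$ closedly in $\bEnd\times_\rS\bEnd$) and with matching the multiplicativity condition across $\bW$ are welcome refinements of details the paper leaves implicit.
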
 

\begin{proof} (1) From their definition we see that the maps under consideration are functorial . That they are bijective on points follows from  Remark \ref{FuFa}.
\smallskip

(2) Assume that $\rS = \Spec(R).$ Then $\cL$ corresponds to an $R$-module $\rL$ which is projective of finite rank.  The $\cO_\rS$--module $\cEnd_{\cO_\rS-{\rm mod}}(\cL)$ corresponds to the $R$-module $\End_{R-{\rm mod}}(\rL) \simeq \rL^* \otimes_R \rL .$ Because the $R$-module map $\End_{R-{\rm mod}}(\rL) \otimes_R R' \to
\End_{R'-{\rm mod}}(\rL\otimes_R R')$ is an isomorphism for all $R'/R,$ it follows that $\bEnd_{\cO_\rS-{\rm mod}}\big(\cL)$ is represented by the affine $R$-scheme of finite presentation $\bW(\rL^*\otimes_R \rL).$ 

It is clear that $\bAut_{\cO_\rS-{\rm alg}}(\cL)$ is a closed subscheme of $\bEnd_{\cO_\rS-{\rm mod}}(\cL),$ hence also affine, which is of finite presentation since $L$ is locally free of finite rank.

From the foregoing considerations it follows that our two functors are affine $\rS$-schemes which are locally of finite presentation. Since their structure morphisms are affine, they are quasi-compact and separated, hence of finite presentation.
\end{proof}

\begin{remark}\label{warning}  Let $\cL$ be a quasi-coherent $\cO_\rS$--module, and consider the corresponding $\cO_\rS$--module $\cEnd_{\cO_\rS-{\rm mod}}(\cL).$ There is a natural $\bO_\rS$--module morphism
$$\bW\big(\cEnd_{\cO_\rS-{\rm mod}}(\cL)\big) \to \bEnd_{\bO_\rS -{\rm mod}}\big(\bW(\cL)\big).$$
This morphism {\it need not} be an isomorphism. It is if $\cL$ is locally free of finite type.
\end{remark}

\subsection{Group schemes and Lie algebras} Throughout $\bG$ will denote an $\rS$-group scheme and we denote by $e \in \bG(\rS)$
its unit section. We refer to \cite[I and II]{SGA3}, \cite{DG}  and \cite[\S 1]{LLR} for details in what follows.

If $\T$ is a scheme we denote by $\T[\epsilon]$ the corresponding scheme of dual number \cite[II.2]{SGA3}. The functor $\T \to \bLie(\bG)(\T) := \ker\big( \bG(\T[\epsilon]) \to \bG(\T)\big)$ is an $\bO_\rS$--module isomorphic to $\bV(\omega^1_{\bG/\rS})$ where $\omega^1_{\bG/\rS}= e^*( \Omega^1_{\bG/\rS})$ ibid. Prop. 3.3 and 3.6. Furthermore, the $\bO_\rS(\T)$--module $\bLie(\bG)(\T)$ has a natural Lie algebra structure. It is thus an $\bO_\rS$--Lie algebra. Recall that for all scheme morphisms $\T \to \rS$ we have a natural $\bO_\T$--Lie algebra isomorphism
\begin{equation} \label{Lialg}
\bLie(\bG) \times_\rS \T \simeq \bLie(\bG_\T).
\end{equation} 
The $\cO_\rS(\rS)$--Lie algebra $\bLie(\bG)(\rS)$ is denoted by $\Lie(\bG).$ From the above isomorphism we have
\begin{equation} \label{Lialg1}
\bLie(\bG)( \T) = \Lie(\bG_\T).
\end{equation} 

We denote by $\cLie(\bG)$ the vector group sheaf (fibration vectorielle) of sections of the affine scheme $\bLie(\bG) \to \rS.$ In other words,  $\cLie(\bG)$ is the  $\cO_\rS$-module $({\omega^1_{\bG/\rS}})^{\vee} = \Hom_{\cO_\rS}(\omega^1_{\bG/S}, \cO_\rS).$ Note that $\cLie(\bG)$ is naturally an $\cO_\rS$--Lie algebra.

\begin{remark}\label{Liesmooth} 
In general $\cLie(\bG)$ does not determine $\bLie(\bG),$ but it does if $\omega^1_{\bG/\rS}$ is locally free of finite type, in particular if $\bG$ is smooth. In this case  $\bLie (\bG) = \bW\big(\cLie(\bG)\big).$ See \cite[II Lemma 4.11.7]{SGA3}.  If $\rS = \Spec(R)$ the $R$--Lie algebra $\rLie(\bG) = \cLie(\bG)(\rS)$ is a locally free $R$--module of finite type. For any ring extension $R'/R$ we have $\rLie(\bG_{R'}) = \bLie(\bG)(R') = \rLie(\bG) \otimes_R R'$.
\end{remark} 
\medskip



\begin{lemma}\label{derLie} Let $\cL$ be an $\cO_\rS$-algebra which is locally free  of finite type (as an $\cO_\rS $--module). Let $\bL = \bW(\cL).$ There is a natural $\bO_\rS$-Lie algebra isomorphism $\bDer_{\bO_\rS-{\rm alg}}(\bL) \simeq \bLie\big(\bAut_{\bO_\rS-{\rm alg}}(\bL)\big).$
\newline
  \end{lemma}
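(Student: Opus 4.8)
The plan is to compare the two $\bO_\rS$-functors on points by means of the dual-number description of the Lie algebra, and then to check that the resulting natural bijection is compatible with both the $\bO_\rS$-module structure and the Lie bracket. First observe that by Lemma \ref{repres} the group $\bAut_{\bO_\rS-{\rm alg}}(\bL) \simeq \bAut_{\cO_\rS-{\rm alg}}(\cL)$ is representable by an affine $\rS$-scheme, so its Lie algebra functor is defined and is given by $\T \mapsto \ker\big(\bAut_{\bO_{\T[\epsilon]}-{\rm alg}}(\bL_{\T[\epsilon]}) \to \bAut_{\bO_\T-{\rm alg}}(\bL_\T)\big)$. Since $\cL$ is locally free of finite type, the affine morphism $\T[\epsilon] \to \T$ yields a canonical splitting $\bL_{\T[\epsilon]} = \bL_\T \oplus \epsilon\,\bL_\T$ of $\bO_{\T[\epsilon]}$-modules, compatibly with $\bO_{\T[\epsilon]} = \bO_\T \oplus \epsilon\,\bO_\T$.

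An element $\phi$ of the above kernel is an automorphism of the $\bO_{\T[\epsilon]}$-algebra $\bL_{\T[\epsilon]}$ reducing to the identity modulo $\epsilon$; I would write $\phi = \id + \epsilon D$, where $D \colon \bL_\T \to \bL_\T$ is the morphism determined by $\phi(x) = x + \epsilon D(x)$ on the constant part, and note that such a $\phi$ is automatically invertible with inverse $\id - \epsilon D$. The requirement that $\phi$ be $\bO_{\T[\epsilon]}$-linear is then equivalent to $\bO_\T$-linearity of $D$, and, computing modulo $\epsilon^2 = 0$, the multiplicativity $\phi(xy) = \phi(x)\phi(y)$ becomes precisely the Leibniz identity $D(xy) = D(x)\,y + x\,D(y)$, valid for an arbitrary (not necessarily associative) algebra, which is what matters since $\cL$ is a general $\cO_\rS$-algebra. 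Conversely every $\bO_\T$-derivation $D$ gives rise to such a $\phi$. This produces a bijection between the kernel and $\Der_{\bO_\T-{\rm alg}}(\bL_\T) = \bDer_{\bO_\rS-{\rm alg}}(\bL)(\T)$, natural in $\T$ because both sides are formed by base change and the assignment $\phi \leftrightarrow D$ is preserved under it.

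It remains to verify that $\phi \mapsto D$ respects the extra structure. For the $\bO_\rS$-module structure, the scalar action of $a \in \bO_\rS(\T)$ on the Lie algebra is induced by the endomorphism of $\T[\epsilon]$ sending $\epsilon \mapsto a\epsilon$, which carries $\id + \epsilon D$ to $\id + \epsilon(aD)$, matching scalar multiplication of derivations; additivity is checked in the same spirit, so the map is an $\bO_\rS$-module isomorphism. The main point, and the step I expect to be most delicate, is the agreement of the brackets. Here I would compute the bracket on $\bLie\big(\bAut_{\bO_\rS-{\rm alg}}(\bL)\big)$ from the group commutator evaluated over the second-order infinitesimal base $\T[\epsilon_1,\epsilon_2]/(\epsilon_1^2,\epsilon_2^2)$: lifting $D, D'$ to $\phi = \id + \epsilon_1 D$ and $\psi = \id + \epsilon_2 D'$ and forming $\phi\psi\phi^{-1}\psi^{-1}$, one obtains $\id + \epsilon_1\epsilon_2\,(DD' - D'D)$, so that the induced bracket is exactly the commutator of derivations. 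This identifies $\bLie\big(\bAut_{\bO_\rS-{\rm alg}}(\bL)\big)$ with $\bDer_{\bO_\rS-{\rm alg}}(\bL)$ as $\bO_\rS$-Lie algebras, which is the assertion.
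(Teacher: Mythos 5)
Your proof is correct and takes essentially the same route as the paper: the paper identifies $\bEnd_{\bO_\rS-{\rm mod}}(\bL)$ with $\bLie\big(\bAut_{\bO_\rS-{\rm mod}}(\bL)\big)$ by citing SGA3 and then restricts to derivations, delegating the affine verification to Demazure--Gabriel, and that verification is exactly the dual-number computation you write out explicitly ($\phi = \id + \epsilon D$, Leibniz from multiplicativity, commutator over $\T[\epsilon_1,\epsilon_2]$ for the bracket). The only difference is that you make the cited computation self-contained; there is no gap.
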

\begin{proof} Since $\bL$ is a good $\bO_\rS$-module \cite[II Def. 4.4 and Ex. 4.4.2]{SGA3}, we have an $\bO_\rS$-module isomorphisms $\eta: \bEnd_{\bO_\rS-{\rm mod}}(\bL) \simeq \bLie\big(\bAut_{\bO_\rS-{\rm mod}}(\bL)\big)$ \cite[II Prop. 4.5]{SGA3}. Since $\cL$ is locally free of finite type we can appeal to Lemma \ref{repres}(1) to conclude that this is in fact an isomorphism of $\rS$-schemes. We claim that the restriction of $\eta$ to $\bDer_{\bO_\rS-{\rm alg}}(\bL)$ is our isomorphism. The proof reduces to the case when $\rS$ is affine, which can be found in \cite[II \S4 2.3]{DG}.
\end{proof}

\subsection{ Connected component of the identity, forms and type of a reductive group scheme.}\label{CCFT}


Let $\bG$ be an $\rS$-group scheme which is locally of finite presentation.
We consider the $\rS$--subgroup (functor) $\bG^\circ$
of $\bG$ \cite[VI$_B$.3.1]{SGA3} defined by
$$ 
\bG^\circ(\T) = \Bigl\{ u \in  \bG(\T) \, \mid \, \forall s \in \rS, u_s(\T_s) 
\subset \bG_s^\circ \Bigr\}.
$$
where $ \bG_s^\circ$ is the connected component of the identity of the $\kappa(s)$-algebraic group $\bG_s.$ If $\bG$ is  smooth along the unit section, $\bG^\circ$ 
is representable by a smooth $\rS$--group scheme  called {\it the connected component of the identity of $\bG$} \cite[VI$_{\rm B}$.4.1]{SGA3}. Furthermore,  the fibre $(\bG^{\circ})_s$ is naturally isomorphic to  $\bG_s^\circ.$

Let $\bG$ be a reductive $\rS$--group scheme, that is a smooth and affine $\rS$--group whose geometric fibres are {\it connected} reductive algebraic groups. An $\rS$--group scheme $\bG'$ is called a {\it (twisted) form of} $\bG,$ if there exist a faithfully flat and localy presented extension $\T \to \rS$ such that $\bG_\T$ and $\bG'_\T$ are isomorphic $\T$--group schemes. In a similar fashion one defines forms of $\bO_\rS$--algebras. 

Let $\bG$ be a reductive $K$-group scheme, where $K$ is an algebraically closed field. If $\bT$ is a maximal torus of $\bG$, then $\bT$ is split and defines a root datum  which is up to isomorphism independent of the choice of $\bT.$ It is called the {\it type of } $\bG$ and is denoted by $\cR(\bG).$

Let $\bG$ be a reductive $\rS$--group scheme. If $s \in \rS,$ the {\it type  of  $\bG$ at $s$} is the type of the reductive $\overline{\kappa(s)}$--group $\bG_{\overline{s}} := \bG_s \times_{\kappa(s)} \overline{\kappa(s)}.$ Because $\bG$ posseses maximal tori locally for the \'etale topology, one knows that the type function $s \mapsto \text{ type of}  \,\, \bG_{\overline{s}}$ is locally constant 
\cite[XXII 2.8]{SGA3}. We say that $\bG$ is of {\it constant type} if the type function is constant. This is the case, for example, if $\rS$ is connected.

\subsection{Specific properties in characteristic zero}\label{subsec_specific}\label{2.4}
We assume in this section that $\rS$ is a $\Q$--scheme.
Let $\bG$ be a semisimple adjoint $\rS$--group scheme of constant 
 type. We will denote the $\bO_\rS$--Lie algebra $\bLie(\bG)$ by $\bL,$ and the $\cO_\rS$--Lie algebra  $\cLie(\bG)$ by $\cL.$ Recall (see Remark \ref{Liesmooth})
that $\cL$
is locally free of finite  type and that $\bW(\cL) = \bL.$ 
Since $\bG$ and its simply connected cover $\bG^{\rm sc}$ have the same Lie algebra,  we have  isomorphisms
 of affine $\rS$--group schemes \cite[XXIV 3.6 and  7.3.1]{SGA3}
 \begin{equation}\label{AutG=AutL}
 \bAut(\bG) \simeq \bAut(\bG^{\rm sc})  \simeq \bAut_{{\bO_\rS}-\rLie}(\bL).\footnote{\, La ``définition évidente" of the $\rS$-group scheme $\underline{\cAut}_{\cO_\rS-alg-de-Lie}(\cL)$  of 7.3.1(iii) is precisely our $\bAut_{\cO_\rS-{\rm Lie}}(\cL),$ so that the asertion is that the natural map $\bAut(\bG^{\rm sc}) \to \bAut_{\cO_\rS-{\rm Lie}}(\cL)$ is an isomorphism. Finally  $\bAut_{\cO_\rS-{\rm Lie}}(\cL) \simeq \bAut_{\bO_\rS-{\rm Lie}}(\bL)$ by Lemma \ref{repres}.} 
\end{equation} In particular, the $\rS$-scheme $\bAut_{{\bO_\rS}-\rLie}(\bL)$ is smooth and affine. By  (\ref{AutG=AutL}) and  \cite[XXIV 1.3 and 1.8]{SGA3} we have an isomorphism
\begin{equation}\label{G}
\bG \simeq \bAut_{{\bO_\rS}-\rLie}(\bL)^{\circ}.
\end{equation} This yields an $\bO_\rS$-Lie algebra isomorphism
\begin{equation}\label{identif}
\bL \simeq \bLie\big(\bAut_{{\bO_\rS}-\rLie}(\bL)\big) = 
\bLie\big(\bAut_{{\bO_\rS}-\rLie}(\bL)^\circ\big).
 \end{equation}

Recall  \cite[II Theo. 4.7 and Prop. 4.8]{SGA3} the adjoint representation \break
 $\Ad : \bG \to \bAut_{{\bO_\rS} - {\rm mod}}(\bL)$ and the induced a $\bO_\rS$-Lie algebra homomorphism
$ \ad:  \bL \to \bEnd_{{\bO_\rS}- {\rm mod}}(\bL)$. 

\begin{lemma}\label{MZ} Under the identification $\bL \simeq \bLie\big( \bAut_{{\bO_\rS}-\rLie}( \bL)\big)$ of (\ref{identif}) the map $\ad$ induces an $\bO_\rS$-Lie algebra isomorphism between $\bL $ and $\bDer_{\bO_\rS - {\rm Lie}}(\bL)$.
\end{lemma}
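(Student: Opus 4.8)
The plan is to recognize the assertion as nothing more than the differentiation of the already-established isomorphism (\ref{G}), once the relevant maps are pinned down. First I would make explicit that (\ref{G}), namely $\bG \simeq \bAut_{\bO_\rS-\rLie}(\bL)^\circ$, is the isomorphism induced by the adjoint action of $\bG$ on $\bL = \bLie(\bG)$. Indeed, by \cite[XXIV 1.3 and 1.8]{SGA3} the inner-automorphism map identifies the adjoint group $\bG$ with $\bAut(\bG)^\circ$, while under (\ref{AutG=AutL}) the isomorphism $\bAut(\bG) \simeq \bAut_{\bO_\rS-\rLie}(\bL)$ is realized by the action of automorphisms of $\bG$ on $\bLie(\bG)$; composing these, the map $\bG \to \bAut_{\bO_\rS-\rLie}(\bL)$ underlying (\ref{G}) is $g \mapsto \Ad(g)$. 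Consequently the adjoint representation factors as
$$\bG \xrightarrow{\ \sim\ } \bAut_{\bO_\rS-\rLie}(\bL)^\circ \hookrightarrow \bAut_{\bO_\rS-\rLie}(\bL) \hookrightarrow \bAut_{\bO_\rS-{\rm mod}}(\bL),$$
the first arrow being (\ref{G}) and the composite being $\Ad \colon \bG \to \bAut_{\bO_\rS-{\rm mod}}(\bL)$.

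Next I would apply the functor $\bLie$ to this factorization. Since $\ad = \bLie(\Ad)$ by the very definition of $\ad$ \cite[II Theo. 4.7]{SGA3}, functoriality carries the displayed chain to
$$\bL \xrightarrow{\ \sim\ } \bLie\big(\bAut_{\bO_\rS-\rLie}(\bL)\big) \hookrightarrow \bLie\big(\bAut_{\bO_\rS-{\rm mod}}(\bL)\big) = \bEnd_{\bO_\rS-{\rm mod}}(\bL),$$
where the equality on the right is the isomorphism $\eta$ from the proof of Lemma \ref{derLie}, and where I use $\bLie(\bAut_{\bO_\rS-\rLie}(\bL)^\circ) = \bLie(\bAut_{\bO_\rS-\rLie}(\bL))$ as in (\ref{identif}). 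By Lemma \ref{derLie} the middle term is $\bDer_{\bO_\rS-\rLie}(\bL)$, compatibly with the closed immersion $\bAut_{\bO_\rS-\rLie}(\bL) \hookrightarrow \bAut_{\bO_\rS-{\rm mod}}(\bL)$ since both Lie-algebra identifications are restrictions of $\eta$. This shows that $\ad$ takes values in $\bDer_{\bO_\rS-\rLie}(\bL) \subset \bEnd_{\bO_\rS-{\rm mod}}(\bL)$ and hence induces the map $\bL \to \bDer_{\bO_\rS-\rLie}(\bL)$ claimed in the statement.

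Finally I would observe that this induced map is exactly $\bLie$ applied to (\ref{G}), that is, it is the identification (\ref{identif}) transported through Lemma \ref{derLie}. Because $\bLie$ is a functor and (\ref{G}) is an isomorphism of smooth affine $\rS$-group schemes (smoothness coming from (\ref{AutG=AutL})), the resulting $\bO_\rS$-module map $\bL \to \bLie(\bAut_{\bO_\rS-\rLie}(\bL)) = \bDer_{\bO_\rS-\rLie}(\bL)$ is an isomorphism; it is a homomorphism of $\bO_\rS$-Lie algebras since all the arrows above are. This is the asserted isomorphism $\bL \simeq \bDer_{\bO_\rS-\rLie}(\bL)$ induced by $\ad$.

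The main obstacle I anticipate is not the isomorphism claim, which is formal once (\ref{G}) is available, but the bookkeeping in the first two paragraphs: one must verify that the definition of $\ad$ as the differential of $\Ad$ landing in $\bEnd_{\bO_\rS-{\rm mod}}(\bL)$ is genuinely compatible with $\bLie$ of the chain of closed immersions, i.e. that the module-level identification $\eta$ restricts to the Lie-level identification of Lemma \ref{derLie}. As a sanity check one may argue fibrewise: in characteristic zero the Jacobi identity already places $\ad$ inside the derivations, and, $\bG$ being semisimple adjoint, $\ad$ is injective (the Lie algebra has trivial centre) and surjective onto derivations (every derivation of a semisimple Lie algebra in characteristic zero is inner), so a rank comparison over each residue field recovers the isomorphism; invoking (\ref{G}) directly, however, bypasses these computations.
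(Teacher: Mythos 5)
Your proof is correct, but it takes a genuinely different route from the paper's. The paper argues by explicit computation: it reduces to $\rS=\Spec(R)$, passes to an \'etale cover $R'/R$ splitting $\bL$ as $\gg\otimes_\Q R'$, and then proves injectivity of $\ad$ from the triviality of the centre and surjectivity from Whitehead's Lemma (every derivation of the split form is inner, citing \cite{P1}), concluding by faithfully flat descent that $\rL=\Der(\rL)$. You instead observe that the lemma is formally the derivative of the group-scheme isomorphism (\ref{G}): once one knows that (\ref{G}) is realized by $g\mapsto\Ad(g)$ --- which does follow from the explicit content of \cite[XXIV 1.3]{SGA3} (the identification $\bG\simeq\bAut(\bG)^\circ$ is $\mathrm{int}$) and of (\ref{AutG=AutL}) (the identification is $\varphi\mapsto\Lie(\varphi)$) --- applying $\bLie$ and using $\ad=\bLie(\Ad)$ together with Lemma \ref{derLie} yields the claim, with the compatibility of the two Lie-level identifications as restrictions of $\eta$ being exactly the bookkeeping point you flag. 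What your approach buys is brevity and the absence of any descent computation; what it costs is that the real analytic content (Whitehead's Lemma, triviality of the centre) is not eliminated but relocated into the proof of \cite[XXIV 7.3.1]{SGA3}, on which (\ref{G}) rests, so the argument is no more elementary --- and it leans on the unverified (though true) assertion that the isomorphisms cited from SGA3 are the ``obvious'' ones. Your closing fibrewise sanity check is essentially the paper's actual proof in miniature; note only that a rank comparison over residue fields alone does not immediately give an isomorphism of $\bO_\rS$-functors (one needs the statement for all base changes $R'$, or local freeness plus Nakayama), which is why the paper phrases its descent argument over an arbitrary fppf extension.
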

\begin{proof} All the morphisms under consideration are $\rS$--scheme morphisms, so the question is local on $\rS$ and we may assume that $\rS = \Spec(R)$ for some $\Bbb{Q}$-ring $R.$ Consider the $R$-Lie algebra  $L =\bL(R).$ Since $\bL = \bW(\rL)$ where $\bW(\rL)(\T) = \rL \otimes_R \cO_\T(\T),$ it will suffice to show that for all ring extensions $R'/R$ the $R'$-Lie algebra homomorphism  $d =\ad_R' : \rL \otimes_R R'\to \text{\rm Der}_{R'-\rLie}(\rL \otimes_R R')$ is an isomorphism. By replacing $\bG$ by $\bG_R'$ we may assume that $R= R'.$ By \cite[II 4.7.2]{SGA3} the map $d$ is nothing but the adjoint representation $d(x) = \text{\rm ad}_\rL(x).$ Since $\bG$ is of constant type there exists a unique Chevalley group $\bG_0$ such that $\bG$ is a twisted form of $\bG_0 \times _\Bbb{Z} R.$ Let $\gg = \bLie(\bG_0)(\Bbb{Z}) \otimes _\Bbb{Z} \Bbb{Q}.$ Then $\gg$ is a finite dimensional split semisimple Lie algebra over $\Bbb{Q}$ and $\rL$  is a twisted form of $\gg \otimes_\Bbb{Q} R,$ that is, there exists an fppf extension (in fact \'etale cover) $R'$ of $R$ such that the $R'$-Lie algebras $\rL \otimes_R R' $ and $(\gg \otimes_\Bbb{Q} R)\otimes_R R' \simeq \gg \otimes_\Bbb{Q} R'$ are isomorphic. Fix one such isomorphism \begin{equation}\label{iso}
\psi : \rL \otimes_R R' \to \gg \otimes_\Bbb{Q} R'.
\end{equation}
 If $d(x) = 0,$ then $x$ belongs to the centre of $\rL$ and therefore $\psi(x \otimes 1)$ belongs to the centre of $\gg \otimes_\Bbb{Q} R',$ which is trivial. Since $R'/R$ is faithfully flat $x = 0$ so that $d$ is injective and we henceforth identify $\rL$ with an $R$-submodule of $\Der(\rL).$ To show that $d$ is surjective, that is that every derivation of $\rL$ is inner,  we reason as follows. If $\rL = \gg \otimes_\Q R$ every derivation is inner by Whitehead's Lemma (see  \cite[Example 4.9]{P1}).\footnote{\,  Note that our $\rL$ is denoted by  $\cL$ in \cite{P1}.} For $\rL$ arbitrary, we appeal to the isomorphism (\ref{iso}). By the split case we have $(\Der(\rL)/\rL)\otimes_R R' = 0.$ Thus $\rL = \Der(\rL)$ as desired. \end{proof}


\subsection{Semicontinuity property for Lie algebras }

For convenience we recall the following  well-known fact.

\begin{lemma} \label{lem_upper} Let $\cF$ be a quasi-coherent $\cO_\rS$--module of finite 
presentation.
Then the function  $s \mapsto \dim_{\kappa(s)}\big( \cF \otimes_{\cO_\rS} \kappa(s)\big)$
is upper semi-continuous.\end{lemma}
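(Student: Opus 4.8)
The plan is to reduce to the affine case and then read off the fibre dimension from a finite presentation matrix. Upper semi-continuity can be checked on the members of an open cover—it asserts that each set $\{s \in \rS : \dim_{\kappa(s)}(\cF \otimes_{\cO_\rS} \kappa(s)) \geq n\}$ is closed—and restriction of $\cF$ to an affine open preserves both quasi-coherence and finite presentation. So first I would fix an affine open $\rU = \Spec(R) \subseteq \rS$ and let $M$ be the finitely presented $R$-module corresponding to $\cF|_{\rU}$; for $\fp \in \Spec(R)$ the fibre is $M \otimes_R \kappa(\fp)$, with $\kappa(\fp)$ the residue field at $\fp$.

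Next I would choose a finite presentation $R^p \xrightarrow{A} R^q \to M \to 0$, where $A$ is a $q \times p$ matrix over $R$. Since tensoring with $\kappa(\fp)$ is right exact, the sequence $\kappa(\fp)^p \xrightarrow{A(\fp)} \kappa(\fp)^q \to M \otimes_R \kappa(\fp) \to 0$ is exact, where $A(\fp)$ denotes the reduction of $A$ modulo $\fp$. Hence
$$ \dim_{\kappa(\fp)}\big(M \otimes_R \kappa(\fp)\big) = q - \rank\big(A(\fp)\big). $$
This translates the problem into the \emph{lower} semi-continuity of the function $\fp \mapsto \rank(A(\fp))$. It is precisely here that finite presentation, rather than mere finite type, enters: one needs finitely many relations so that $A$ has finitely many columns and its minors make sense.

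For the rank I would argue through minors: for an integer $r$ one has $\rank(A(\fp)) \geq r$ exactly when some $r \times r$ minor $m$ of $A$ is nonzero at $\fp$, i.e. $\fp \in D(m)$. Thus $\{\fp : \rank(A(\fp)) \geq r\}$ is the union of the basic open sets $D(m)$ over all $r \times r$ minors $m$ of $A$, and is therefore open; equivalently $\{\fp : \dim_{\kappa(\fp)}(M \otimes_R \kappa(\fp)) \geq n\} = \{\fp : \rank(A(\fp)) \leq q - n\}$ is closed in $\rU$. Gluing over an affine cover of $\rS$—a subset being closed if and only if its trace on each chart is closed—then yields the global statement. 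I do not expect a serious obstacle; the one point deserving care is the compatibility of the local computations under passage to the global function, which holds because $\cF \otimes_{\cO_\rS} \kappa(s)$ is computed inside any affine open containing $s$. The essential content of the argument is the minor description of the rank.
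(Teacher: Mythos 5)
Your proof is correct and follows essentially the same route as the paper's: reduce to an affine chart, read the fibre dimension off a finite presentation matrix via right-exactness of the tensor product, and use nonvanishing of minors to show the locus where the rank is at least a given value is open. The only cosmetic difference is that you describe the whole open set as a union of $D(m)$ over all minors, whereas the paper picks a single suitable minor at a given point; the content is identical.
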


\begin{proof} The statement is local and therefore reduces then to the case of a ring $R$
 and of an $R$--module $M$ of finite presentation \cite[Tag 01PC]{Stacks}. 
If $R$ is noetherian then $M$  is coherent and  
the result can be found in  \cite[12.7.2]{Ha}.
For lack of a reference we provide a proof in the general case.

We consider an exact sequence $R^m \xrightarrow{A} R^n \to M \to 0$
where $A$ is a matrix of size $(m,n)$ with entries in $R.$ Let $\gq \in \Spec(R)$ and $r=\dim_{\kappa(\gq)} (M \otimes_R \kappa(\gq) ).$ Since the sequence
${\kappa(\gq)}^m \xrightarrow{A_{\kappa(\gq)}} {\kappa(\gq)}^n \to M_{\kappa(\gq)} \to 0$ is exact,
there  exists a minor $B$ of $A$ of size $(n-r,n-r)$ such that $\det(B_{\kappa(\fq)} )\neq 0 \in \kappa(\fq).$ Consider the basic open set $\rU$ of $\Spec(R)$ consisting of prime ideals of $R$ that do not contain $f = \det(B) \in R.$ From  $\det B_{\kappa(\fq)} \neq 0$ it follows that $\fq \in \rU.$ It is clear that for all $\gp \in \rU$ the image of $A_{\kappa(\gp)}$ is a $\kappa(\gp)$-space of dimension at least $n-r.$  Thus
$$\dim_{\kappa(\gp)} \big(M \otimes_R \kappa(\gp) \big)
\leq r = \dim_{\kappa(\gq)} \big(M \otimes_R \kappa(\gq) \big)$$
for all $\gp \in \rU,$  so that our function is thus upper semi-continuous as desired.
\end{proof}

\begin{lemma} \label{lem_upper_Lie} Let $\bG$ be an $\rS$--group scheme that is locally of finite presentation.
Then $\omega^1_{\bG/\rS}$ is an $\bO_\rS$--module of finite presentation and
the map $s \mapsto \dim_{\kappa(s)}\Lie(\bG_s)$
is upper semi-continuous.
\end{lemma}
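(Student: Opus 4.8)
The plan is to deduce both claims from the finite presentation of $\omega^1_{\bG/\rS}$ together with the semicontinuity already recorded in Lemma~\ref{lem_upper}, the bridge being the identity $\bLie(\bG) = \bV(\omega^1_{\bG/\rS})$ from the preliminaries.

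First I would check that $\omega^1_{\bG/\rS}$ is of finite presentation. Since $\bG \to \rS$ is locally of finite presentation, locally on $\bG$ we may write $\bG = \Spec\big(R[t_1,\dots,t_n]/(f_1,\dots,f_m)\big)$, and then $\Omega^1_{\bG/\rS}$ is the cokernel of a Jacobian matrix; hence $\Omega^1_{\bG/\rS}$ is a quasi-coherent $\cO_\bG$--module of finite presentation. Pulling back along the unit section $e\colon \rS \to \bG$ preserves finite presentation, so $\omega^1_{\bG/\rS} = e^*(\Omega^1_{\bG/\rS})$ is a quasi-coherent $\cO_\rS$--module of finite presentation (equivalently, the associated $\bO_\rS$--module is of finite presentation). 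This settles the first assertion.

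The key step for the second assertion is to identify $\dim_{\kappa(s)}\Lie(\bG_s)$ with the fibre dimension of $\omega^1_{\bG/\rS}$. Taking $\T = \Spec(\kappa(s))$ in the definition of $\bV$ gives
$$\bLie(\bG)(\kappa(s)) = \bV(\omega^1_{\bG/\rS})(\kappa(s)) = \Hom_{\kappa(s)}\big(\omega^1_{\bG/\rS}\otimes_{\cO_\rS}\kappa(s),\ \kappa(s)\big),$$
while by (\ref{Lialg1}) the left-hand side equals $\Lie(\bG_s)$. Since $\omega^1_{\bG/\rS}$ is of finite type, its fibre $\omega^1_{\bG/\rS}\otimes_{\cO_\rS}\kappa(s)$ is a finite-dimensional $\kappa(s)$--vector space, which has the same dimension as its dual, so
$$\dim_{\kappa(s)}\Lie(\bG_s) = \dim_{\kappa(s)}\big(\omega^1_{\bG/\rS}\otimes_{\cO_\rS}\kappa(s)\big).$$
It then suffices to apply Lemma~\ref{lem_upper} to the finitely presented module $\cF = \omega^1_{\bG/\rS}$ to conclude that the right-hand side, and hence $s \mapsto \dim_{\kappa(s)}\Lie(\bG_s)$, is upper semi-continuous.

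I do not expect a genuine obstacle: the argument is essentially a chain of references to the facts already stated. The only point demanding a little care is the bookkeeping in the last paragraph, namely matching the functor-of-points value $\bLie(\bG)(\kappa(s))$ with $\Lie(\bG_s)$ through (\ref{Lialg1}), and observing that passing to the dual does not change the dimension of a finite-dimensional vector space; once this is in place, Lemma~\ref{lem_upper} does the rest.
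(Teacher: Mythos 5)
Your proposal is correct and follows essentially the same route as the paper: identify $\Lie(\bG_s)$ with the dual of the fibre $\omega^1_{\bG/\rS}\otimes_{\cO_\rS}\kappa(s)$ via $\bLie(\bG)\simeq\bV(\omega^1_{\bG/\rS})$ and (\ref{Lialg1}), then invoke Lemma~\ref{lem_upper}. The only cosmetic difference is that you spell out the Jacobian-cokernel argument for finite presentation of $\omega^1_{\bG/\rS}$ where the paper simply cites the Stacks project.
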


\begin{proof} That $\omega^1_{\bG/\rS}$ is of finite presentation follows from  \cite[Tag 01V2,  01V3]{Stacks}.
 
By (\ref{Lialg1}) we have a $\kappa(s)-$Lie algebra isomorphism $\Lie(\bG_s) \simeq \bLie(\bG)\big(\kappa(s)\big).$ From the isomorphism $\bLie(\bG) \simeq\bV( \omega^1_{\bG/\rS})$, we see that the dimension of $\bLie(\bG)\big(\kappa(s)\big)$ at a point $s \in \rS$
is the dimension of the  $\kappa(s)$--space $\Hom_{\kappa(s)}\big( \omega^1_{\bG/\rS} \otimes_{\cO_S} \kappa(s), \kappa(s)\big),$ which is the dimension of  $\omega^1_{\bG/\rS} \otimes_{\cO_S} \kappa(s)$ since $\omega^1_{\bG/\rS}$ is  an $\cO_\rS$--module of finite type. Furthermore, since $\omega^1_{G/S}$ is in fact of finite presentation, 
Lemma \ref{lem_upper} implies that the map $s \mapsto \dim_{\kappa(s)}\bigl( \bLie(\bG)(\kappa(s)) \bigr)$
is upper semi-continuous.
\end{proof}

\bigskip

\section{Group schemes over Jacobson schemes}

\subsection{Jacobson schemes and smoothness}
We refer the reader to \cite[\S 10]{EGAIV} for general results about Jacobson schemes. Let $\rX$ be a topological space. We recall that a subset $\rX_0$ of $\rX$ is called {\it very dense} if for every closed subset $\rZ \subset \rX$ we have $\rZ = \overline{\rZ \cap \rX_0}.$ Let $\rS$ be a scheme. View $\rS$ as a topological space and let $\rS_0 \subset \rS$ the set of closed points. The scheme $\rS$ is said to be {\it Jacobson} if $\rS_0$ is a very dense subset of $\rS.$ 

Let us recall for the sake of completeness that:

(i) Very dense subsets of a topological space are dense.

(ii) If  $\rS= \Spec(R)$, then $\rS$ is Jacobson if and only if $R$ is a Jacobson ring, that is every ideal of $R$ is an intersection of maximal ideals.

(iii) If $\rS= \bigcup_{i \in I} \Spec(R_i)$, the $\rS$ is Jacobson if and only if every $R_i$ is a Jacobson ring.

 (iv) Every radical ideal $I$ is 
 the intersection of the maximal ideals containing it  \cite[Tag 00G4]{Stacks}.\footnote{AP. We will not use these properties in what follows. }

\begin{proposition} Let $f : \rX \to \rS$ be a scheme morphisms which is locally of finite presentation. Assume that $x \in \rX$ is such that:
\medskip

(i) $f$ is flat at $x$.

(ii) $f_{\kappa(x)}$ is smooth.
\medskip

\noindent Then $f$ is smooth on an open neighborhood of $x.$
\end{proposition}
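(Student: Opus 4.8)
The plan is to show that $f$ is smooth at the single point $x$ and then to invoke the openness of the smooth locus. Since being locally of finite presentation, being flat, and being smooth are all conditions local on source and target, I would first pass to an affine situation $\rS = \Spec(R)$, $\rX = \Spec(A)$ with $A = R[T_1,\dots,T_n]/(g_1,\dots,g_m)$ a finitely presented $R$--algebra, and let $\mathfrak{q}\subset A$ be the prime corresponding to $x$, with $s = f(x)$ and residue field $\kappa(s)$. With the notation of the statement, $f_{\kappa(x)}$ is the base change $\rX\times_\rS \Spec\kappa(x) = \rX_s \times_{\kappa(s)} \kappa(x)$; since smoothness descends along the faithfully flat field extension $\kappa(x)/\kappa(s)$, hypothesis (ii) is equivalent to the fibre $\rX_s$ being smooth over $\kappa(s)$, in particular at the point $x$.

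The heart of the matter is the fibrewise criterion for smoothness: for a morphism locally of finite presentation, $f$ is smooth at $x$ if and only if $f$ is flat at $x$ and the fibre $\rX_s$ is smooth (i.e.\ geometrically regular) over $\kappa(s)$ at $x$ \cite[17.5.1]{EGAIV}. Hypotheses (i) and (ii) furnish exactly these two conditions, so $f$ is smooth at $x$. I would then conclude with the fact that the locus $\{\,y \in \rX : f \text{ is smooth at } y\,\}$ is open \cite[17.5.2]{EGAIV}, which yields an open neighborhood of $x$ on which $f$ is smooth.

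If one prefers an argument that does not black-box the criterion, I would run the Jacobian version. First, flatness propagates: the flat locus of a morphism locally of finite presentation is open \cite[11.3.1]{EGAIV}, so after shrinking $\rX$ around $x$ we may assume $f$ is flat throughout. Next, let $d = \dim_x \rX_s$ be the local fibre dimension. Smoothness of $\rX_s$ at $x$ over the field $\kappa(s)$ means, via the Jacobian criterion over a field, that the Jacobian matrix $\big(\partial g_i/\partial T_j\big)$ reduced modulo $\mathfrak{q}$ has rank $c := n - d$; hence some $c\times c$ minor $\Delta \in A$ of $\big(\partial g_i/\partial T_j\big)$ satisfies $\Delta \notin \mathfrak{q}$, i.e.\ $\Delta(x)\neq 0$. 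On the basic open set $D(\Delta)\ni x$, where $\Delta$ is invertible, the morphism is flat, finitely presented, and has Jacobian of maximal rank $c$, so the relative Jacobian criterion gives smoothness there \cite[Tag 01V8]{Stacks}, producing the desired neighborhood directly.

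The step I expect to be the main obstacle is precisely the implication ``flat at $x$ plus smooth fibre at $x$ $\Rightarrow$ smooth at $x$''. Passing from a pointwise condition on the fibre $\rX_s$ to a genuine relative statement about $f$ near $x$ requires controlling how the relative differentials $\Omega^1_{\rX/\rS}$ behave in the family: one must show that $\Omega^1_{\rX/\rS}$ is locally free of rank $d$ near $x$ and that $f$ is formally smooth there, and this is where flatness is indispensable and where the local criterion for flatness (the real content of \cite[17.5.1]{EGAIV}) enters. By contrast, the two openness inputs (open flat locus and open smooth locus) and the reduction to the affine Jacobian picture are routine.
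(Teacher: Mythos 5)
Your proof is correct and follows essentially the same route as the paper: reduce to the affine case and apply the fibrewise criterion ``flat at $x$ plus smooth fibre over $\kappa(f(x))$ implies smooth at $x$'' (the paper cites \cite[Tag 00TF]{Stacks}, which is the affine form of \cite[17.5.1]{EGAIV}), then conclude by openness of the smooth locus. The extra Jacobian-criterion variant you sketch is a fine unpacking of that citation but is not needed.
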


\begin{proof} By assumption there exists affine open subschemes $\Spec(A) = \rU \subset \rX$ and $\Spec(B) = \rV \subset \rS$ such that $x \in \rU,$ $f(\rU) \subset \rV$ and $A$ is a finitely presented $B$-algebra. Let $\fq \in \Spec(A)$ and $\fp \in \Spec(B)$ be the points corresponding to $x$ and $f(x)$ respectively.

By (i) $A_\fq$ is flat over $B_\fp$, while by (ii) $A \otimes_B \kappa(\fp)$ is smooth over $\kappa(\fp).$ It follows from \cite[Tag 00TF]{Stacks}  that the restriction of $f$ to $\rU$ is smooth at $\fq$, hence (by definition of smoothness) smooth also in a neighbourhood of $\fq.$
\end{proof}

\begin{corollary}\label{cor_smooth_jacobson} Let $f : \rX \to \rS$ be as above. Assume that (i) and (ii) hold for all closed points of $\rX$. If $\rS$ is Jacobson, then $f$ is smooth.
\end{corollary}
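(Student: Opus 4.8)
The plan is to show that the locus where $f$ is smooth is an open subset of $\rX$ containing every closed point, and then to use the Jacobson hypothesis to force that open subset to be all of $\rX$. The Proposition does the local work; the Corollary is obtained by a density argument.

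First I would let $U \subseteq \rX$ denote the set of points at which $f$ is smooth. This set is open, since by the convention used in the Proposition smoothness at a point entails smoothness on a whole neighbourhood of that point. Now let $x$ be any closed point of $\rX$. By hypothesis (i) and (ii) hold at $x$, so the Proposition applies and yields an open neighbourhood of $x$ on which $f$ is smooth; in particular $x \in U$. Hence $U$ contains the set $\rX_0$ of all closed points of $\rX$. The role of the Proposition here is exactly to upgrade the pointwise data at a closed point (flatness at $x$ together with smoothness of the fibre $f_{\kappa(x)}$) into genuine smoothness on a neighbourhood.

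The key geometric input is that $\rX$ is itself a Jacobson scheme. Since $f$ is locally of finite presentation it is a fortiori locally of finite type, and a scheme locally of finite type over a Jacobson scheme is again Jacobson \cite[10.4.6]{EGAIV}. Consequently $\rX_0$ is a very dense subset of $\rX$, i.e. every closed subset of $\rX$ is the closure of its set of closed points.

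It then remains to run the purely topological step. Set $\rZ = \rX \setminus U$, a closed subset of $\rX$. By very density, $\rZ = \overline{\rZ \cap \rX_0}$. But $\rZ \cap \rX_0 = \emptyset$, because $U \supseteq \rX_0$ by the previous paragraph, and therefore $\rZ = \overline{\emptyset} = \emptyset$. Thus $U = \rX$, which is to say that $f$ is smooth. The one point genuinely requiring care, and the step I would expect to be the main obstacle, is the transfer of the Jacobson property from $\rS$ to $\rX$ along a morphism locally of finite type; once that is in hand, the conclusion is immediate from the Proposition and the definition of a very dense subset.
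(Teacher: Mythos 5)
Your proof is correct and follows essentially the same route as the paper's: transfer the Jacobson property from $\rS$ to $\rX$ (the paper cites \cite[Cor.~10.4.7]{EGAIV}), use the Proposition to place all closed points of $\rX$ in the open smooth locus, and conclude by very density that the closed complement is empty. No gaps.
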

\begin{proof} By \cite[Cor. 10.4.7]{EGAIV}  $\rX$ is  Jacobson. Let $\rX_0$ be the set of closed points of $\rX$ viewed as a topological space. By the Proposition there exists an open $\rU \subset \rX$ containing $\rX_0$ in which $f$ is smooth. Let $\rZ = \rX \setminus \rU.$ If $\rZ \neq \emptyset$, then $\rZ \cap \rX_0 \neq \emptyset$ because $X_0$ is very dense. This contradicts  $\rX_0 \subset \rU.$  Thus $\rZ = \emptyset$ so that $\rU = \rX$ as desired.
 \end{proof}

\begin{proposition}  \label{prop_smooth}
Let $\rS$ be an integral  Jacobson scheme, and
$\bG$ be an $\rS$-group scheme  of finite presentation. Assume that for all closed point $b \in \rS$ the fibres
$\bG_b$ are smooth and of the same dimension $d$.\footnote{\, Since $\bG$ is of finite presentation $\bG_s$ is an algebraic $\kappa(s)$-group. Thus $\bG_s$ has a finite number of irreducible components. They all have the same (finite) dimension  and this  is also the dimension of $\bG_s.$ } Then.
\smallskip

\noindent (1) For all  $s \in \rS$ the algebraic $\kappa(s)$--group $\bG_s$ 
 is smooth of dimension $d.$
 \smallskip
 
\noindent (2) 
The $\rS$-functor $\bG^\circ$ is representable by a smooth $\rS$--group scheme
of relative dimension $d$  which is open in $\bG$.

\smallskip

\noindent (3) If for
each closed point $b$ of $\rS$ the fibre
$\bG_b$ is connected,
then $\bG$ is smooth and has connected geometric fibres (i.e., $\bG_{\overline{b}}$ is connected).
\end{proposition}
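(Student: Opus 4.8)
The plan is to treat (1) by two semicontinuity arguments, to deduce (2) from the representability statement recalled in \S\ref{CCFT} once I know that $\bG$ is smooth along the unit section, and to obtain (3) from (2) by a density argument. Throughout I use the defining property of a Jacobson scheme that closed points are very dense, so that an open subset containing every closed point is the whole space, while a closed subset containing no closed point is empty.

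For (1) I would first control the fibre dimension. The function $x \mapsto \dim_x \bG_{f(x)}$ is upper semi-continuous on $\bG$ by Chevalley \cite[13.1.3]{EGAIV}; composing with the continuous unit section $e\colon \rS \to \bG$ and using that the algebraic group $\bG_s$ is equidimensional (so $\dim_{e(s)}\bG_s = \dim \bG_s$), the function $s \mapsto \dim \bG_s$ is upper semi-continuous on $\rS$. It equals $d$ at every closed point, so the closed set $\{s : \dim \bG_s \ge d+1\}$ and the open set $\{s : \dim \bG_s < d\}$ each miss all closed points and are therefore empty; hence $\dim \bG_s = d$ for all $s$. On the other hand $\dim_{\kappa(s)}\Lie(\bG_s) \ge \dim \bG_s$ always, with equality exactly when $\bG_s$ is smooth. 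By Lemma \ref{lem_upper_Lie} the function $s \mapsto \dim_{\kappa(s)}\Lie(\bG_s)$ is upper semi-continuous and equals $d$ at closed points, so the same Jacobson argument gives $\dim_{\kappa(s)}\Lie(\bG_s) \le d$ for all $s$. Combining, $d \le \dim \bG_s \le \dim_{\kappa(s)}\Lie(\bG_s) \le d$, forcing equality and hence smoothness of every $\bG_s$.

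For (2), part (1) shows that $\bG_s$ is smooth at $e(s)$ for every $s$, and moreover that $\omega^1_{\bG/\rS}$, of finite presentation by Lemma \ref{lem_upper_Lie}, has constant fibre dimension $d$ and is therefore locally free of rank $d$. It then suffices to prove that $f\colon\bG \to \rS$ is flat along the unit section: granting this, $\bG$ is smooth along $e(\rS)$, and the statement recalled in \S\ref{CCFT} (\cite[VI$_{\rm B}$ 4.1]{SGA3}) yields that $\bG^\circ$ is representable by a smooth open $\rS$-subgroup scheme with $(\bG^\circ)_s \simeq \bG_s^\circ$; its relative dimension is $d$ since $\dim \bG_s^\circ = \dim \bG_s = d$. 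To obtain flatness along $e(\rS)$ I would argue that the flat locus of $f$ is open (finite presentation) and contains the whole generic fibre $\bG_\eta$, the latter being flat over the field $\kappa(\eta)$; by the Jacobson reduction it then remains to establish flatness of $f$ at the points $e(b)$ with $b$ closed, where $e(b)$ lies in the smooth, $d$-dimensional identity-component fibre $\bG_b^\circ$. This last flatness is the main obstacle: one must propagate the generic flatness to the closed points, the input being that the identity component keeps constant fibre dimension $d$ over the integral base $\rS$ (so that no component is created or destroyed), via generic flatness \cite[6.9.1]{EGAIV} together with a purity-type propagation argument.

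Finally, (3) follows formally from (2). If every closed fibre $\bG_b$ is connected then $(\bG^\circ)_b = \bG_b^\circ = \bG_b$, so $\bG^\circ$ contains every closed point of $\bG$. Since $\bG$ is Jacobson (being of finite type over the Jacobson scheme $\rS$) and $\bG \setminus \bG^\circ$ is closed, the latter must be empty; thus $\bG = \bG^\circ$ is smooth of relative dimension $d$, and its geometric fibres $\bG_{\overline b} = (\bG_{\overline b})^\circ$ are connected. I expect the flatness propagation along the unit section in (2) to be the single genuinely delicate point; everything else reduces to semicontinuity combined with the density of closed points.
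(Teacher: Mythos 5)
Your parts (1) and (3) are correct, and in places cleaner than the paper's own argument: for (1) the paper first uses generic flatness over a dense open to make the generic fibre smooth of dimension $d$ and then runs specialization chains through $\eta$, $s$ and a closed point $b\in\overline{\{s\}}$, whereas your observation that the closed set $\{s:\dim \bG_s\ge d+1\}$ and the open set $\{s:\dim \bG_s<d\}$ both avoid all closed points (hence are empty over a Jacobson base) dispenses with the generic fibre entirely; for (3) the paper works with the locally constructible level sets $E_n$ counting geometric components, while you use that $\bG$ is itself Jacobson and that the open subscheme $\bG^\circ$ contains every closed point. Both routes are sound.

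The genuine gap is in (2), at exactly the step you flag as ``the main obstacle''. Flatness of $\bG\to\rS$ at the points $e(b)$, $b$ closed, does not follow from generic flatness plus constancy of the fibre dimension: generic flatness only yields flatness over some dense open $\rU\subset\rS$, and a given closed point need not lie in $\rU$; constancy of fibre dimension does not imply flatness over a base that is merely integral (not regular, not even noetherian), and there is no ``purity-type propagation argument'' that bridges this --- the closest general tool, miracle flatness, would require $\rS$ regular and $\bG$ Cohen--Macaulay, neither of which is assumed. The implication you are attempting to prove by hand --- fibres smooth along the unit section and of locally constant dimension imply $\bG$ smooth along the unit section, with $\bG^\circ$ then representable by a smooth open subgroup scheme --- is precisely the content of \cite[VI$_{\rm B}$ Cor.\ 4.4]{SGA3}, and this is what the paper invokes: part (1) verifies its fibrewise hypothesis and the conclusion of (2) is then immediate. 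So your proof of (2) cannot be completed along the route you sketch; you must either cite that theorem or reproduce its nontrivial, group-scheme-specific proof (which exploits the infinitesimal structure at the identity, not a general flatness-spreading principle).
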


 \begin{proof} (1) We denote by $\eta: \Spec(F) \to \rS$
 the generic point of $\rS$.
 Chevalley's generic flatness theorem  \cite[10.85]{GW}
 shows that there exists an open dense subset $\rU$ of $\rS$
 such that $\bG \times_\rS \rU$ is flat over $\rU$. We know that  $\rU$ is also  Jacobson  \cite[Prop. 10.3.3]{EGAIV}.  Corollary  \ref{cor_smooth_jacobson} shows that $\bG \times_\rS \rU$ is 
 smooth over $\rU$. By \cite[Tag 05F7]{Stacks}, up to shrinking 
$\rU$ if necessary, the dimension function of the fibres of $\bG \times_\rS \rU \to \rU$ is constant. Since by assumption this dimension has value $d$ on the closed points of $\rU$,
 we get that  $\bG \times_\rS \rU$ is smooth of relative dimension $d$.\footnote{\, That is, all the fibres $\bG_s, s \in \rU$ are equidimensional and this dimension is $d.$}
 In particular the algebraic $F$-group $\bG_F$ is smooth of dimension $d$.

 Let $s \in \rS$ and let $c = \dim(\bG_s). $  According to Chevalley's semicontinuity theorem \cite[VI$_B$.4.1]{SGA3}
 $$F_c = \{x \in \rS : \dim(\bG_x) \geq c\} \subset \rS$$
 is a closed set. Since $F_c \cap \overline{ \{s\}}$ contains $s$ and is closed
 \begin{equation}\label{closedpoint}
 \overline{ \{s\}} \subset F_c.
 \end{equation}
Since $\rS$ is Jacobson, it follows that there exists a closed point $b$ in $\overline{ \{s\}}.$ We have  
\begin{equation}\label{inneq1}
\dim(\bG_s) \leq \dim(\bG_b)=d.
\end{equation}

 On the other hand $s$ belongs to $\overline{\{\eta\}}.$ The same semicontinuity reasoning used above shows that 
 \begin{equation}\label{inneq2}
 \dim(\bG_F) \leq \dim(\bG_s). 
 \end{equation}
Since $\bG_F$ is of dimension $d$ it follows from (\ref{inneq1}) and  (\ref{inneq2})  that $\dim(\bG_s)=d$.

 For establishing   smoothness we use a variation of the previous argument 
 by applying semicontinuity considerations to the Lie algebras. Lemma \ref{lem_upper_Lie}   yields the inequalities
  $$
   \dim_F \Lie(\bG_F) \leq 
  \dim_{\kappa(s)}\Lie(\bG_s)\leq \dim_{\kappa(b)}\Lie(\bG_b).$$
 Since $\bG$ is smooth of dimension $d$ at $\eta$ and at $b$ we 
 obtain that $d=\dim_{\kappa(s)}\Lie(\bG_s).$ Thus $\Lie(\bG_s)$ and $\bG_s$ have the same dimension (namely $d$). By the smoothness criterion \cite[II.5.2.1]{DG}  $\bG_s$ is smooth. This completes the proof of  (1).

 (2) By (1) we see that assumption (ii) of \cite[VI$_B$ Cor. 4.4]{SGA3} holds.  It follows that 
 the $\rS$-functor $\bG^\circ$ is representable by a smooth $\rS$--group scheme
which is open in $\bG$.

\smallskip

\noindent

(3) According to \cite[Tag 055I]{Stacks}, the level set
$$
E_n= \bigl\{ s \in \rS \mid \bG_s \enskip \hbox{has 
$n$ geometrically connected components} \bigr\}
$$
is a locally constructible subset of $\rS$
for each $n \geq 1$.
Since $\rS$ in a Jacobson scheme, 
the set $\rS_0$ of closed points of the underlying topological space of $\rS$ is very 
dense in $\rS.$ Thus $\rS_0 \cap E_n$
is  dense in $E_n$ for each $n$
\cite[10.1.2, (b')]{EGAIV}.
Our assumption implies that $E_n = \emptyset$ for $n \geq 2$ so that 
$E_1=\rS$.
 That $\bG$ is smooth now follows from (2).
\end{proof}

 \subsection{Forms}

 \begin{proposition} \label{prop_form}  
 Let $\rS$ is an integral Jacobson scheme and $\bG$ an affine $\rS$--group scheme of finite presentation.
 Assume  that $\bG_b$ is reductive and of dimension $d$ 
 for all closed point $b \in \rS$.
 Then $\bG$ is a reductive $\rS$--group scheme of constant type. In particular, there exists a unique Chevalley group $\bG_0$ such that $\bG$ is an  $\rS$--form of 
 $\bG_0 \times_\Z \rS$.
 \end{proposition}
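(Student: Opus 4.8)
The plan is to check directly that $\bG$ satisfies the definition of a reductive $\rS$-group scheme — smooth, affine, with connected reductive geometric fibres — and then to read off constancy of type and the existence of the Chevalley form from the standard structure theory, invoking the Jacobson hypothesis through the same open/very-dense mechanism as in Corollary \ref{cor_smooth_jacobson}.

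First I would feed the hypotheses into Proposition \ref{prop_smooth}. Each closed fibre $\bG_b$ is reductive, hence smooth, (geometrically) connected, and of dimension $d$; thus the assumptions of that Proposition hold, including the connectedness needed for its part (3). I therefore obtain at once that $\bG$ is smooth over $\rS$, that every fibre $\bG_s$ is smooth of dimension $d$, and that all geometric fibres are connected. Since $\bG$ is affine by hypothesis, the problem is thereby reduced to the single assertion that every geometric fibre $\bG_{\overline{s}}$ is reductive.

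The heart of the argument — and the step I expect to be the main obstacle — is to propagate reductivity from the closed fibres to all of $\rS$. Now that $\bG$ is a smooth affine $\rS$-group scheme with connected fibres, the locus $\rU = \{\, s \in \rS : \bG_{\overline{s}} \text{ is reductive}\,\}$ is open in $\rS$ (openness of the reductive locus for smooth affine group schemes with connected fibres, \cite[XIX]{SGA3}). By hypothesis $\rU$ contains $\rS_0$. Its complement $\rZ = \rS \setminus \rU$ is closed, and were it nonempty then, $\rS_0$ being very dense in the Jacobson scheme $\rS$, we would get $\rZ \cap \rS_0 \neq \emptyset$, contradicting $\rS_0 \subset \rU$; hence $\rU = \rS$. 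The one nontrivial input here is the openness statement itself, the only ingredient not already packaged in the results above; granting it, the closing argument is identical to the one in Corollary \ref{cor_smooth_jacobson}. At this point $\bG$ is smooth, affine, and has connected reductive geometric fibres, i.e. it is a reductive $\rS$-group scheme.

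It remains to record constancy of type and the Chevalley form. Since $\rS$ is integral it is connected, while the type function $s \mapsto \cR(\bG_{\overline{s}})$ is locally constant by \cite[XXII 2.8]{SGA3} (recalled in \S\ref{CCFT}); hence it is constant, with common value some root datum $\cR$. By the existence and uniqueness theorem for Chevalley--Demazure group schemes \cite[XXIII and XXV]{SGA3} there is a unique split reductive $\Z$-group scheme $\bG_0$ of type $\cR$, and because a reductive group scheme of constant type is étale-locally — a fortiori fppf-locally — isomorphic to the split form of its type, $\bG$ is an $\rS$-form of $\bG_0 \times_\Z \rS$. Uniqueness of $\bG_0$ is then immediate: any such form has type $\cR$, and Chevalley groups over $\Z$ are classified by their root data.
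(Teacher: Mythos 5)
Your proposal is correct and follows essentially the same route as the paper: Proposition \ref{prop_smooth}.(3) for smoothness and connectedness of fibres, openness of the reductive locus around each closed point (the paper cites \cite[Prop.~3.1.9.(1)]{Co1} where you cite SGA3 XIX, but this is the same input) combined with the very-dense-closed-points argument, and then local constancy of the type plus \cite[XXIII 5.6]{SGA3} for the Chevalley form. No gaps.
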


 \begin{proof} Proposition \ref{prop_smooth}.(3) shows that  $\bG$ is smooth of relative dimension $d$
 and has connected geometric fibres.
 According to \cite[Prop. 3.1.9.(1)]{Co1} $\bG$ is reductive in a neighborhood of each 
 closed point $b \in \rS$. Since  $\rS$ is a Jacobson scheme, it follows that  
  $\bG$ is reductive. Since the type function is locally 
  constant, the connectedness of 
$\rS$ implies that  $\bG$ has constant type $t_0$ (see \S \ref{CCFT}). Let  $\bG_0$ be the corresponding Chevalley
group scheme.
According to \cite[XXIII 5.6]{SGA3} $\bG$ is an $\rS$--form of $\bG_0 \times_\Z \rS$.
 \end{proof}

 \begin{corollary} \label{cor_form} Let $\bG$ and $\bG'$ be group schemes over an integral Jacobson scheme $\rS.$
Assume that $\bG$ is reductive and that $\bG'$ is affine 
 and of finite presentation.
If $\bG'_b$ is a form of $\bG_b$  
 for each closed point $b \in \rS$,
 then $\bG'$ is an $\rS$--form of 
 $\bG$. In particular, $\bG'$ is a reductive $\rS$--group scheme \qed \end{corollary}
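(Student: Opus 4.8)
The plan is to apply Proposition~\ref{prop_form} to \emph{both} $\bG$ and $\bG'$ and then to identify the two resulting Chevalley groups. Since $\bG$ is reductive it is affine and of finite presentation, and as $\rS$ is integral, hence connected, $\bG$ has constant type $t_0$ and constant relative dimension $d$ (see \S\ref{CCFT}). Thus for every closed point $b \in \rS$ the fibre $\bG_b$ is reductive of dimension $d$, so Proposition~\ref{prop_form} applies to $\bG$ and produces the unique Chevalley group $\bG_0$ of type $t_0$ with $\bG$ an $\rS$--form of $\bG_0 \times_\Z \rS$.

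First I would transfer these fibrewise properties to $\bG'$. By hypothesis $\bG'_b$ is a form of $\bG_b$, so there is a faithfully flat, locally finitely presented cover $\T \to \Spec(\kappa(b))$ over which $\bG'_b$ and $\bG_b$ become isomorphic. Smoothness, affineness, and fibre dimension all descend along such covers, and after passing to $\overline{\kappa(b)}$ the forms trivialize (over an algebraically closed field an fppf form of a smooth affine group is trivial), so $(\bG'_b)_{\overline{\kappa(b)}} \simeq (\bG_b)_{\overline{\kappa(b)}}$. Hence each $\bG'_b$ is reductive of dimension $d$, with type equal to that of $\bG_b$, namely $t_0$. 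Proposition~\ref{prop_form} now applies to $\bG'$ and yields a unique Chevalley group $\bG'_0$ with $\bG'$ an $\rS$--form of $\bG'_0 \times_\Z \rS$; reading off the constant type of $\bG'$ at any closed point gives $\bG'_0 = \bG_0$.

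It then remains to conclude that $\bG'$ is a form of $\bG$ rather than merely of the same Chevalley model. Both $\bG$ and $\bG'$ are $\rS$--forms of $\bG_0 \times_\Z \rS$; refining the two trivializing fppf covers over a common base change makes $\bG$ and $\bG'$ simultaneously isomorphic to $\bG_0 \times_\Z \rS$, hence to each other, so $\bG'$ is an $\rS$--form of $\bG$ and is reductive by the above. I expect the main obstacle to be the middle step: the definition of ``form'' supplies only an isomorphism after an unspecified faithfully flat base change, so genuinely pinning down that $\bG'_b$ is reductive of dimension $d$ \emph{and of type $t_0$} requires invoking fppf descent of smoothness and affineness together with the triviality of forms over an algebraically closed field in order to control the type.
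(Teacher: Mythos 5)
Your proof is correct and is essentially the argument the paper intends (the corollary is stated with an immediate \qed precisely because it reduces to Proposition \ref{prop_form}): connectedness of the integral base gives constant type and dimension for $\bG$, the fibrewise form hypothesis transfers reductivity, dimension and type to each $\bG'_b$ after passing to a common algebraically closed overfield, and Proposition \ref{prop_form} together with the uniqueness of the Chevalley model identifies $\bG'$ and $\bG$ as forms of the same $\bG_0 \times_\Z \rS$, hence of each other. Your cautionary remarks about descending smoothness, affineness and type along the trivializing fppf cover are exactly the (routine) points one must check.
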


 \subsection{Lie algebras}
 If $\cL$ is an $\cO_\rS$--Lie algebra and $s \in \rS,$ then the $\cO_{\Spec\kappa(s)}$--algebra $\cL \otimes_{\cO_\rS} \kappa(s)$ obtained by base change is simply a $\kappa(s)$--algebra that we will denote by $\cL_s.$\footnote{\, A  $\cO_{\Spec\kappa(s)}$--algebra ``is the same" as a  $\kappa(s)$--algebra.} Let $\bL =\bW(\cL)$ and denote by a harmless abuse of notation  the $\kappa(s)$--algebra $\bL\big(\kappa(s)\big)$ by $\bL_s.$ Note that $\cL_s =\bL_s.$ 
 
 \begin{proposition} \label{prop_lie_form} Assume that $\rS$ is an integral Jacobson $\Q$--scheme.
 Let $\cL$ be an $\cO_\rS$--Lie algebra which is locally free of rank $d$, and let $\bL = \bW(\cL)$ be its corresponding $\bO_\rS$--Lie algebra.
 Assume that for each closed point $b \in \rS$ the $\kappa(b)$--Lie algebra $\bL_b$
 is  semisimple.\footnote{\, Necessarily of finite dimension $d.$} Then there exists a unique finite dimensional split semisimple Lie $\Q$-algebra $\rm{L}_0$
 such that $\bL$ is a form of $\bL_0 := \rm{L}_0 \otimes_\Q \bO_\rS$.\footnote{\, By definition $\bL_0(\T)$ is the $\cO_\T(\T)$-Lie algebra ${\rm L}_0\otimes_\Q \cO_\T(\T).$}
 \end{proposition}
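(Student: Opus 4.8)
The plan is to translate the statement about the Lie algebra $\bL$ into a statement about its automorphism group scheme, and then invoke Proposition \ref{prop_form}. Set $\mathbf{A} := \bAut_{\bO_\rS-{\rm Lie}}(\bL)$. Since $\cL$ is locally free of finite rank, Lemma \ref{repres} shows that $\mathbf{A}\simeq \bAut_{\cO_\rS-{\rm Lie}}(\cL)$ is representable by an affine $\rS$--group scheme of finite presentation, and that its formation commutes with base change, so that $\mathbf{A}_s \simeq \Aut_{\kappa(s)-{\rm Lie}}(\bL_s)$ for every $s \in \rS$. First I would record that every fibre $\bL_s$ is semisimple, not merely the closed ones: here the Jacobson hypothesis enters pleasantly through Cartan's criterion. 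The discriminant of the Killing form of $\cL$ is a global section $\delta \in \cO_\rS(\rS)$ whose vanishing locus $V(\delta)$ is closed and meets no closed point (each $\bL_b$ being semisimple), so by very density $V(\delta)=\overline{V(\delta)\cap \rS_0}=\emptyset$. Thus $\bL_s$ is semisimple of dimension $d$ for all $s$, and in characteristic zero each $\mathbf{A}_s=\Aut(\bL_s)$ is smooth with $\Lie(\mathbf{A}_s)=\Der(\bL_s)=\ad(\bL_s)$ of dimension $d$, its identity component $(\mathbf{A}_s)^\circ$ being the adjoint group of $\bL_s$, a connected semisimple (hence reductive) group of dimension $d$.

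Since the closed fibres of $\mathbf{A}$ are smooth of common dimension $d$, Proposition \ref{prop_smooth}(1),(2) applies and produces the smooth open $\rS$--subgroup $\bG := \mathbf{A}^\circ$ of relative dimension $d$, whose fibres are the adjoint groups $(\mathbf{A}_s)^\circ$ and are therefore connected reductive of dimension $d$. The remaining point, which I expect to be the main obstacle, is that $\bG$ is \emph{affine}, which is precisely what allows Proposition \ref{prop_form} to be applied. I would argue this by showing that $\bG=\mathbf{A}^\circ$ is not merely open but clopen in $\mathbf{A}$: the sheaf of connected components $\mathbf{A}/\mathbf{A}^\circ$ is a finite \'etale $\rS$--group scheme (its geometric fibres being the finite diagram--automorphism groups $\Out(\bL_{\overline s})$), and the unit section of a finite \'etale group scheme is a clopen immersion; pulling it back along $\mathbf{A}\to \mathbf{A}/\mathbf{A}^\circ$ exhibits $\bG$ as a clopen, hence closed, subscheme of the affine scheme $\mathbf{A}$, so $\bG$ is affine of finite presentation. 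The delicate part is the representability and finiteness of $\mathbf{A}/\mathbf{A}^\circ$, for which the real input is the constancy of the type over the connected base $\rS$ (the fact that all fibres are semisimple of the fixed dimension $d$ being the first step toward controlling the number of geometric components). Granting affineness, Proposition \ref{prop_form} shows $\bG$ is a reductive $\rS$--group scheme of constant type, and since its fibres are adjoint, $\bG$ is a form of a semisimple adjoint Chevalley group $\bG_0 = \mathrm{G}_{0,\Z}\times_\Z \rS$.

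It then remains to recover $\bL$ from $\bG$. By Lemma \ref{derLie} we have $\bLie(\bG)=\bLie(\mathbf{A})\simeq \bDer_{\bO_\rS-{\rm Lie}}(\bL)$, the first equality holding because $\bG$ is open in $\mathbf{A}$. The adjoint map $\ad : \bL \to \bDer_{\bO_\rS-{\rm Lie}}(\bL)$ is a morphism of locally free rank--$d$ modules which, fibrewise, is injective (trivial centre) and surjective (Whitehead's Lemma: every derivation of $\bL_s$ is inner), exactly as in the proof of Lemma \ref{MZ}; its cokernel is therefore a finitely presented module vanishing at every point, so $\ad$ is an isomorphism and $\bL \simeq \bLie(\bG)$. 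Taking Lie algebras of an isomorphism trivialising $\bG$ thus exhibits $\bL$ as a form of $\bLie(\bG_0)=\rm{L}_0 \otimes_\Q \bO_\rS$, where $\rm{L}_0 := \Lie(\mathrm{G}_{0,\Q})$ is the split semisimple Lie $\Q$--algebra attached to the type; here the hypothesis that $\rS$ is a $\Q$--scheme guarantees $\bLie(\bG_0)=\rm{L}_0\otimes_\Q \bO_\rS = \bL_0$. Finally, uniqueness of $\rm{L}_0$ follows from the uniqueness of the Chevalley group $\bG_0$ in Proposition \ref{prop_form}, equivalently from the fact that a split semisimple Lie $\Q$--algebra is determined up to isomorphism by its type.
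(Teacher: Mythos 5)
Your proof follows the paper's own argument essentially step for step: pass to $\mathbf{A}=\bAut_{\bO_\rS-{\rm Lie}}(\bL)$, apply Propositions \ref{prop_smooth} and \ref{prop_form} to its identity component to identify it as a form of an adjoint Chevalley group, and recover $\bL$ via $\ad:\bL\to\bDer_{\bO_\rS-{\rm Lie}}(\bL)\simeq\bLie(\mathbf{A}^\circ)$ exactly as in Lemmas \ref{derLie} and \ref{MZ}. The only divergences are minor and mostly to your credit: the Killing-form/very-density argument that \emph{all} fibres are semisimple is an extra (harmless) step, your direct fibrewise-plus-Nakayama proof that $\ad$ is an isomorphism is a clean substitute for the citation of Lemma \ref{MZ}, and you explicitly flag -- though do not fully establish -- the affineness of $\mathbf{A}^\circ$ needed to invoke Proposition \ref{prop_form}, a point the paper itself passes over in silence.
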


 \begin{proof} (1) By Lemma \ref{repres} $\bG :=\bAut_{\bO_\rS {\rm - Lie}}(\bL) $ is an affine $\rS$--group of finite presentation. For each closed point $b \in \rS$, the algebraic group
 $\bG_b= \bAut_{\kappa(b)-{\rm Lie}}(\bL_b)$ is smooth of dimension $d$
 and $\bG_b^\circ$ is semisimple adjoint.
 Proposition \ref{prop_smooth}.(2) shows that 
 $\bG^\circ$ is representable by an open subgroup scheme of $\bG$ which 
 is smooth of relative dimension $d.$ Appealing now to Proposition \ref{prop_form}
 yields that $\bG^\circ$ is adjoint semisimple and is a form of the $\rS$-group corresponding to a (unique) semisimple adjoint Chevalley group scheme
 $\bG_0^\circ.$  We denote 
 by $\rm{L}_0$ the $\Q$--Lie algebra of $\Lie(\bG_0^\circ) \otimes_\Z \Q.$ This yields the $\bO_\rS$-Lie algebra  that we denoted by $\bL_0.$ Observe
 that $\bLie(\bG)$ is a form of $\bL_0$ as an $\bO_\rS$-Lie algebras. 
 
  By Lemma \ref{MZ} we have
 $\bL \simeq \bDer_{\bO_\rS-\rLie}(\bL).$ 
 Finally 
since $\bG \simeq \bAut_{\bO_\rS-\Lie}(\bL)$ we can apply Lemma \ref{derLie} to conclude that
$ \bLie(\bG) \simeq \bDer_{\bO_\rS-\rLie}(\bL).$ It follows that $\bL$ is a form of $\bL_0$ as desired.
\end{proof}

The following variant of the ideas presented heretofore will allow us to answer Burban's question (see \S \ref{Burban} below). 
 
\begin{proposition} \label{prop_lie_form2}
Assume that $\rS$ is an integral Jacobson $\Q$--scheme.
Let $\bG$ be a semisimple adjoint $\rS$--group scheme and consider its $\cO_\rS$--Lie algebra 
$\cL=\cLie(\bG)$.
 
Let $\cL'$ be a sheaf of $\cO_\rS-$Lie algebras. We assume that as an $\cO_\rS$--module
$\cL'$ is locally free of rank $d,$ and  that for each 
closed point $b\in \rS$,  $\cL'_b$ is a form of  
$\cL_b$.

\smallskip

\noindent (1) The $\rS$--functor $ \bAut_{\cO_\rS - {\rm Lie}}(\cL')$ is a smooth affine 
 $\rS$--group scheme 
whose connected  component of the identity $\bG'$  is an $\rS$--form of $\bG$. In particular $\bG'$ is a semisimple adjoint $\rS$--group scheme. 

\smallskip

\noindent (2) There is a natural isomorphism $\cLie(\bG') \simeq \cL'$.

\smallskip

\noindent (3) $\cL'$ is an $\rS$--form of $\cL$.

\smallskip

\noindent (3 bis)  $\bL'$ is an $\rS$--form of $\bL,$ where $\bL' = \bW(\cL')$ and $\bL = \bW(\cL)$.

\end{proposition}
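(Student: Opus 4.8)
The plan is to realize $\bG'$ as the identity component of the automorphism scheme of $\cL'$ and to push the fibrewise hypothesis through the results of Section~3. Set $\bH := \bAut_{\cO_\rS-{\rm Lie}}(\cL')$; by Lemma~\ref{repres}(2) it is an affine $\rS$--group scheme of finite presentation. First I would examine the closed fibres. For a closed point $b$, base change gives $\bH_b \simeq \bAut_{\kappa(b)-{\rm Lie}}(\cL'_b)$. Since $\rS$ is a $\Q$--scheme and $\bG$ is semisimple adjoint, $\cL_b = \Lie(\bG_b)$ is semisimple, hence so is its form $\cL'_b$, a $\kappa(b)$--Lie algebra of dimension $d$; therefore $\bH_b$ is smooth of dimension $d$ and $\bH_b^{\circ}$ is semisimple adjoint. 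Reading (\ref{G}) over $\kappa(b)$ identifies $\bG_b$ with $\bAut_{\kappa(b)-{\rm Lie}}(\cL_b)^{\circ}$, and since $\cL'_b$ is a form of $\cL_b$, the component $\bH_b^{\circ}$ is a form of $\bG_b$. Proposition~\ref{prop_smooth}(2) then makes $\bG' := \bH^{\circ}$ a smooth open $\rS$--subgroup scheme of $\bH$ of relative dimension $d$ (it is closed, hence affine, because the component group $\bH/\bH^{\circ}$ is finite \'etale). As $\bG$ is reductive and $\bG'_b$ is a form of $\bG_b$ at every closed point, Corollary~\ref{cor_form} shows $\bG'$ is an $\rS$--form of $\bG$, in particular semisimple adjoint. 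This is (1).

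For (2) I would compute $\bLie(\bG')$. Being open and containing the unit section, $\bG'$ satisfies $\bLie(\bG') = \bLie(\bH)$ (as in (\ref{identif})). The isomorphism $\bAut_{\bO_\rS-{\rm Lie}}(\bL') \simeq \bH$ of Lemma~\ref{repres}(1) gives $\bLie(\bH) \simeq \bLie\big(\bAut_{\bO_\rS-{\rm Lie}}(\bL')\big)$, and Lemma~\ref{derLie} identifies this with $\bDer_{\bO_\rS-{\rm Lie}}(\bL')$. It then remains to prove $\bDer_{\bO_\rS-{\rm Lie}}(\bL') \simeq \bL'$, which is the content of Lemma~\ref{MZ}. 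To use its proof I would first apply Proposition~\ref{prop_lie_form} to $\cL'$ (licit since $\cL'_b$ is semisimple at each closed point), obtaining that $\bL'$ is an fppf--form of a split semisimple $\bO_\rS$--Lie algebra. The Whitehead and descent argument of Lemma~\ref{MZ} then applies verbatim: $\ad$ is injective because the centre is fppf--locally trivial, and surjective because every derivation is fppf--locally inner. Composing the isomorphisms yields $\bLie(\bG') \simeq \bL' = \bW(\cL')$, and by the full faithfulness of $\bW$ (Remark~\ref{FuFa}) together with Remark~\ref{Liesmooth} this descends to a natural isomorphism $\cLie(\bG') \simeq \cL'$, which is (2).

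For (3) I would combine (1) and (2). Pick an fppf cover $\T \to \rS$ with $\bG'_\T \simeq \bG_\T$; passing to Lie algebras, which commute with base change by (\ref{Lialg}), gives $\cLie(\bG')_\T \simeq \cLie(\bG)_\T = \cL_\T$. Part (2) then gives $\cL'_\T \simeq \cL_\T$, so $\cL'$ is an $\rS$--form of $\cL$. Applying the base--change--compatible functor $\bW$ to this isomorphism gives $\bL'_\T \simeq \bL_\T$, which is (3 bis).

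The hard part will be the identification in (2). Producing the isomorphism $\bDer_{\bO_\rS-{\rm Lie}}(\bL') \simeq \bL'$ hinges on knowing that $\bL'$ is fppf--locally split semisimple, so that Whitehead's Lemma is available; this is precisely why Proposition~\ref{prop_lie_form} must be run on $\cL'$ before the derivation computation. A lesser technical point is the affineness of $\bG' = \bH^{\circ}$ required by Corollary~\ref{cor_form}, which I would settle by noting that $\bH^{\circ}$ is closed in the affine scheme $\bH$ because its component group is finite \'etale.
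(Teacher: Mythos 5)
Your proposal is correct and follows essentially the same route as the paper: form $\bH=\bAut_{\cO_\rS-{\rm Lie}}(\cL')$, use the closed-fibre hypothesis together with Proposition~\ref{prop_smooth}(2) and the Jacobson property to show $\bG'=\bH^{\circ}$ is a reductive form of $\bG$, and then identify $\cLie(\bG')$ with $\cL'$ via Lemmas~\ref{derLie} and~\ref{MZ} and the full faithfulness of $\bW$. Your extra step of first running Proposition~\ref{prop_lie_form} on $\cL'$ so that the Whitehead/descent argument of Lemma~\ref{MZ} legitimately applies to $\bL'$ is a sensible precaution that the paper leaves implicit.
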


\begin{proof} (1)   Let $\bL' = \bW(\cL').$ By Lemma \ref{repres}  $\bH := \bAut_{\cO_\rS - {\rm Lie}}(\cL') \simeq \bAut_{\bO_\rS - {\rm Lie}}(\bL')$ is an affine $\rS-$group scheme of finite presentation.  Since $\bH_\T = {\bAut_{\bO_\T - {\rm Lie}}(\bL'_\T)}$ for all  $\rS$--scheme $\T,$ we have $\bH_s = \bAut(\bL'_s)$ for all $s \in \rS,$ where $ \bL'_s :=  \bL' \otimes_{\cO_\rS} \kappa(s).$ These algebraic groups are smooth since $\kappa(s)$ is of characteristic $0.$
 
Let $b \in \rS$ be a closed point. Since the $\kappa(b)$--Lie algebra $\cL'_b$ is a twisted form of $\cL_b$, the corresponding $\kappa(b)$-algebraic group $\bAut(\cL'_b)$ is a twisted form of $\bAut(\cL_b).$ Similarly for their connected component of the identity. As we have seen that $\bAut(\cL'_b) = \bH_b.$ On the other hand since $\bG$ is semisimple adjoint $\bAut(\cL_b)^\circ = \bG_b.$ This yields that $\bH_b^\circ$ is a twisted form of $\bG_b$. In particular all the $\bH_b$ are smooth groups of the same dimension. Proposition \ref{prop_smooth}.(2) then shows that the $\rS$--subfunctor $\bH^\circ$ 
 of $\bH$ is representable by a smooth affine $\rS$--group scheme $\bG'$. 
 
 According to \cite[prop. 3.1.9.(1)]{Co1},  $\bG'$ is reductive on a neighborhood of each 
 closed point $b \in \rS$. Since  $\rS$ is a Jacobson scheme, it follows that  
  $\bG'$ is reductive. Since $\rS$ is connected the type of $\bG'$ is constant. Since $\bG'$ and $\bG$ have the same type at closed points, their (constant) types coincide. Thus $\bG'$ is a twisted form of $\bG$ \cite[XXIII 5.6]{SGA3}. In particular, $\bG'$ is semisimple adjoint.

 \smallskip
 
 \noindent (2) 
  By Lemma \ref{derLie} 
$$\bDer_{\bO_\rS-{\rm alg}}(\bL') \simeq \bLie\big(\bAut_{\bO_\rS-{\rm alg}}(\bL')\big) = \bLie(\bH^\circ) = \bLie(\bG').$$

\smallskip 

On the other hand we have an isomorphism of $\bO_\rS$--Lie algebras
$\bL' \simeq \bDer_{\bO_\rS-{\rm alg}}(\bL')$ (Lemma \ref{MZ}), whence an isomorphism 
$\bL' \simeq \bLie(\bG')$.
It follows that $\bW(\cL') \simeq \bW(\cLie(\bG'))$.
Since $\bW$ is full and faithful $\cL'$ and  $\cLie(\bG')$
 are isomorphic $\cO_\rS$--Lie algebras.

 \smallskip

 \noindent (3) From (2) and Remark \ref{Liesmooth} we get that $\bLie(\bG') = \bW\big(\cLie(\bG')\big)$ is a form of $\bLie(\bG) = \bW\big(\cLie(\bG)\big).$ It follows that $\cLie(\bG')$ is a form of $\cLie(\bG).$ Thus (3) follows from (2).
 
 \smallskip
 
\noindent (3 bis.) Follows from (3) since $\bW$ commutes with base change.
 \end{proof}

 \begin{section} {Burban's question}\label{Burban}

 In order to formulate Burban's question we need to recall the concept of loop algebra of a simple Lie finite dimensional complex Lie algebra $\gg.$
 
 \noindent Let $R= \C[t^{\pm 1}].$ Fix a positive
integer $d,$ and set $R_d = \C[t^{\pm
\frac{1}{d}}].$ The natural map $R\to R_d$ is  faithfully flat and finite \'etale. Let 
$\xi \in \C$ be a primitive $d$-th root of unity. Then the elements of $\Gamma = \Z/d\Z$ act as automorphisms of $R_d$ over $R$ via $$
^{\ol{e}} t^{\frac{1}{d}}= \xi  ^e
t^{\frac{1}{d}}.
$$
for $e \in \Z.$ This action makes $R_d$ into a Galois extension of  $R$ with Galois group $\Gamma.$

Let  $\sigma$ be an automorphism of $\gg$ of order $d.$   For $i \in \Z$ consider the
 eigenspace
$$
\gg_i =\{x\in \gg:\sigma(x) = \xi  ^ix\} $$

Then  $\gg = \us{0 \leq i <d}\oplus \gg_i$.
Out of this data we define the corresponding 
 {\it loop algebra} 
\begin{equation}\label{defmulti}
 L(\gg,\sigma) = \us{i \in \Z}\oplus\,
\gg_i \otimes t^{\frac{i}{d}} \subset \gg\otimes _\C R_d.
\end{equation}


The simple but crucial observation is  that $ L(\gg,\sigma)$  is stable under the scalar action of  $R.$ Thus $L(\gg,\sigma)$  is not only an infinite dimensional complex Lie algebra, but also an $R$-Lie algebra. As we shall see, it is the algebra structure over this ring that allows non-abelian cohomological considerations to enter into the picture. 

It is an easy linear algebra exercise to verify that we have a natural $R_d$--algebra isomorphism

\begin{equation}\label{iso1} L(\gg, \sigma) \otimes_R R_d \simeq \gg \otimes_\C R_d \simeq \gg_R \otimes_R  R_d.
\end{equation}
where  $\gg_R := \gg \otimes_\C R.$
This shows that the $R$-Lie algebra $ \rL := L(\gg, \sigma)$ is a twisted form of $\gg \otimes_\C R.$ It therefore corresponds to an $\bAut(\gg_R)$--torsor $\bX$ over $\Spec(R).$  More precisely $\bX = \bIsom_{R-{\rm Lie}}(\gg_R, \rL).$\footnote{\, See \cite[III \S4]{DG} for the material on torsors used in this section.} 

We can now formulate Burban's questions (essentially verbatim except for some notation changes):

{\bf Question 1:} Let $\rL'$ be a Lie algebra over the ring $\C[t]$. Assume $\rL'$ is free as a module and  that for
any complex number $b$ the quotient Lie algebra $\rL'/(t-b)\rL'$ is isomorphic to
$\gg.$ Does it follow that $\rL'$ is isomorphic to $\gg \otimes_\C \C[t]$ (as a Lie algebra over $\C[t]$)?

{\bf Question 2:} Similarly, let  $\rL'$ be a Lie algebra over the ring $\C[t^{\pm 1}]$ which we assume is free as a module and such that for any $b \in \C^\times$ the quotient $\rL'/(t-b)\rL'$ is
isomorphic to $\gg.$ Does it follow that $\rL'$ is isomorphic to a  loop
algebra (with respect to an automorphism of $\gg$ of finite order)?

\medskip

To answer these questions we take $\rS =\Spec(R)$ where $R =\C[t]$ or $\C[t^{\pm 1}]$, and where $\cL'$  and $\cL$ are 
the $\cO_\rS$--Lie algebra corresponding to $\rL'$ and $\gg \otimes_\C R$ respectively. By Proposition \ref{prop_lie_form2} we see that $\cL'$ corresponds to a torsor $\bX'$ over $\Spec(R)$ whose class is an element of $H^1(R, \bAut(\gg_R)).$

Recall (\cite[XXIV 1.3 and 7.3.1]{SGA3}. See also \S \ref{2.4}) the split exact sequence of $R$--group schemes
\begin{equation}\label{splitexact}
1 \to \bG \to \bAut(\rL) \to \bOut(\rL) \to 1
\end{equation}
where $\bG$ is the split adjoint semisimple  $R$-group scheme corresponding to $\gg$, and $\bOut(\rL)$ is the constant $R$-group scheme corresponding to the finite (abstract) group ${\rm Out}(\gg)$ of the symmetries of the Coxeter-Dynkin diagram of $\gg.$ This allow us to compute the relevant $H^1$ an thus determine the nature of $\bX'$, hence $\rL'.$
According to \cite[Cor. 3.3]{CGP}, we have a bijection
$$
H^1(R,\bAut(\rL)) \simlgr  H^1(R,\bOut(\rL)).
$$

\medskip

{\bf Answer 1:} If  $R =\C[t]$ then  $H^1\big(R, \bOut(L))=1$ since $R$ is simply connected. It follows that $H^1\big(R, \bAut(\rL)\big) = 1$ and therefore that $L' \simeq \gg \otimes_k R$. 
\medskip

{\bf Answer 2:} If $R = \C[t^{\pm 1}]$ then furthermore 
$H^1(R, \bOut(\rL))$ is  the set of conjugacy classes of the (abstract) group ${\rm Out}(\gg)$ which, in terms of forms of $\rL$, correspond  to the loop algebras $L(\gg, \sigma)$ with $\sigma \in {\rm Out}(\gg)$  (see \cite{P2} for details, or more generally \cite{P3}). Thus $\rL'$  is a loop algebra. 

 \end{section}

\end{document}